\theoremstyle{definition}
\newtheorem{theorem}{Theorem}[section]
\newtheorem{claim}[theorem]{Claim}
\newtheorem{corollary}[theorem]{Corollary}
\newtheorem{definition}[theorem]{Definition}
\newtheorem{fact}[theorem]{Fact}
\newtheorem{lemma}[theorem]{Lemma}
\newtheorem{proposition}[theorem]{Proposition}
\newtheorem{remark}[theorem]{Remark}
\newcommand{\R}{\mathbb{R}}
\newcommand{\Z}{\mathbb{Z}}
\newcommand{\boldhead}[1]{%
 {\bigskip \noindent \bfseries #1 \\ }}
\newcommand{\Agerm}{\mathcal{A}}
\newcommand{\germ}{\mathcal{G}_\infty}
\DeclareMathOperator{\supp}{supp}
\DeclareMathOperator{\Homeo}{Homeo}
\DeclareMathOperator{\fix}{fix}
\DeclareMathOperator{\Aff}{Aff_+(\mathbb{R})}
\DeclareMathOperator{\id}{id}
\title{\vspace{-1cm} Left-orderable groups that don't act on the line
}
\author{Kathryn Mann}
\date{}
\begin{document}

\maketitle  

\abstract{We show that the group $\germ$ of germs at infinity of orientation-preserving homeomorphisms of $\R$ admits no action on the line.  This gives an example of a left-orderable group of the same cardinality as $\Homeo_+(\R)$ that does not embed in $\Homeo_+(\R)$.    
As an application of our techniques, we construct a finitely generated group $\Gamma \subset \germ$ that does not extend to $\Homeo_+(\R)$ and, separately, extend a theorem of E. Militon on homomorphisms between groups of homeomorphisms.  
}

%---------------------------------------------------------------------------------
\section{Introduction}

\begin{definition}
A group $G$ is \emph{left-orderable} if there is a total order $\leq$ on $G$ that is invariant under left multiplication. 
\end{definition}
The study of left-orderable groups and left-invariant orders on groups has deep connections with algebra, dynamics, and topology. Examples of left-orderable groups include all torsion-free abelian groups, free groups, braid groups, the group $\Homeo_+(\R)$ of orientation-preserving homeomorphisms of the line, and the fundamental groups of orientable surfaces.  We refer the reader to \cite{DNR} for an introduction to the subject from a dynamical viewpoint.     

One important link between orders and dynamics comes from the following classical theorem (in \cite{DNR} this theorem is attributed to \cite{Holland}) relating left-invariant orders to actions on the line.  

\begin{theorem}[see Theorem 6.8 in \cite{Ghys} for a proof] \label{realization thm}
Let $G$ be a countable group.  Then $G$ is left-orderable if and only if there is an injective homomorphism $G \to \Homeo_+(\R)$.  Moreover, given an order on $G$, there is a canonical (up to conjugacy in $\Homeo_+(\R)$) injective homomorphism $G \to \Homeo_+(\R)$ called a  \emph{dynamical realization}.
\end{theorem}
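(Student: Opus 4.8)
The plan is to prove the two implications separately and then establish canonicity, the substantive content lying in the construction.

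For the easy direction, suppose $\rho \colon G \to \Homeo_+(\R)$ is injective. I would fix a countable dense set $\{x_1, x_2, \dots\} \subset \R$ and, for distinct $g, h \in G$, let $i$ be the least index at which $\rho(g)(x_i) \neq \rho(h)(x_i)$; such an $i$ exists because $\rho(g) \neq \rho(h)$ forces the (open, nonempty) set where they disagree to meet a dense set. Declaring $g \prec h$ when $\rho(g)(x_i) < \rho(h)(x_i)$ at this first index, I would verify trichotomy and transitivity to get a total order, and check left-invariance: for any $f$ the homeomorphisms $\rho(fg)$ and $\rho(fh)$ first disagree at the same index (since $\rho(f)$ is injective), and $\rho(f)$ being increasing preserves the inequality there.

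For the substantive direction, write $G = \{g_0 = e, g_1, g_2, \dots\}$ and suppose $\leq$ is a left-invariant order. First I would embed $(G, \leq)$ into $\R$ order-preservingly by greedy placement: set $t(g_0) = 0$, and, having placed $g_0, \dots, g_{n-1}$, place $t(g_n)$ one unit above the current maximum (resp.\ below the current minimum) if $g_n$ exceeds (resp.\ lies below) all earlier elements, and otherwise at the midpoint of the images of its immediate order-neighbours among $g_0, \dots, g_{n-1}$. Left-invariance makes $t(h) \mapsto t(gh)$ an order-preserving bijection of the orbit $t(G)$; the key step is to promote each such bijection to an element $\rho(g) \in \Homeo_+(\R)$ by extending monotonically to the closure $\overline{t(G)}$ and filling each bounded complementary gap $(a,b)$ by the unique orientation-preserving affine homeomorphism onto its image gap. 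One checks $\rho(gh) = \rho(g)\rho(h)$, and faithfulness is immediate since $\rho(g)(0) = t(g) \neq 0$ whenever $g \neq e$.

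For canonicity, given two such realizations $\rho, \rho'$ of the same order, I would note that $g \mapsto \rho(g)(0)$ and $g \mapsto \rho'(g)(0)$ are order-isomorphic copies of $(G, \leq)$, form the equivariant order-isomorphism $\phi_0$ between the two orbits, and extend it over closures and affinely across gaps by the same recipe to obtain $\phi \in \Homeo_+(\R)$; equivariance of $\phi_0$ forces $\phi \rho(g) \phi^{-1} = \rho'(g)$ on a dense set, hence everywhere. I expect the main obstacle to be the extension step: one must confirm that the affine interpolation over gaps is compatible with the group law and yields genuine homeomorphisms of all of $\R$. This requires handling the unbounded complementary rays at the ends of $t(G)$, where the clean point is that a nontrivial left-ordered group has no maximal or minimal element, so each $\rho(g)$ fixes the two ends and may be extended there by the identity.
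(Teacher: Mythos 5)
The paper does not prove this statement; it cites Th\'eor\`eme 6.8 of Ghys's survey, and your construction is exactly the standard dynamical realization given there (comparison along a countable dense orbit for the easy direction; greedy order-embedding $t:G\to\R$, extension over $\overline{t(G)}$, and affine interpolation on the gaps for the converse). The one point you flag as the obstacle --- that $t(h)\mapsto t(gh)$ extends continuously to $\overline{t(G)}$ --- is indeed where the care is needed, and it is precisely the midpoint placement rule that supplies it: a point of $\overline{t(G)}\setminus t(G)$ corresponds to a cut of $(G,\leq)$ with no extremal element on at least one side (since consecutive gaps there keep halving), and this property is preserved by left multiplication, so the order-preserving bijection has no jump at such a point.
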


Theorem \ref{realization thm} does not apply to uncountable groups.  In particular, a free abelian group of cardinality larger than $|\R|$ is left-orderable, but obviously cannot embed in $\Homeo_+(\R)$, which has cardinality equal to $|\R|$.    However, there are also uncountable, left-orderable groups that \emph{do} embed in $\Homeo_+(\R)$ -- one example is $\Homeo_+(\R)$ itself.  

Remarkably, there seem to be very few known examples of uncountable left ordered groups of cardinality $|\R|$ that don't act on the line.  One method to construct examples is to take a group $\Gamma$ that has only finitely many left orders (and hence strong constraints on its actions on the line), and build a group $G$ containing uncountably many copies of $\Gamma$ related to each other in an appropriate way.  We conclude this paper with two examples that illustrate this method; the main one is due to C. Rivas.

The central result of this paper provides an interesting complementary example -- a naturally occurring group of cardinality $|\R|$  that has no dynamical realization.

\begin{definition}
The \emph{group of germs at $\infty$} of homeomorphisms of $\R$, denoted $\germ$, is the set of equivalence classes of orientation-preserving homeomorphisms under the equivalence relation $f \sim g$ if $f$ and $g$ agree on some neighborhood $[x, \infty)$ of $\infty$.   
Composition of homeomorphisms descends from $\Homeo_+(\R)$ to $\germ$, making $\germ$ a group.  
\end{definition}

Navas has shown that $\germ$ is left-orderable (see Proposition \ref{lo prop} below).  
Our main theorem is the following.  

\begin{theorem} \label{main thm}
There is no nontrivial homomorphism $\germ \to \Homeo_+(\R)$.  
\end{theorem}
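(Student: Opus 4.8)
The plan is to show that any homomorphism $\varphi\colon \germ \to \Homeo_+(\R)$ must be trivial by exploiting the extreme algebraic flexibility of $\germ$ -- specifically, the fact that germs of homeomorphisms can be ``absorbed'' and conjugated in ways that have no analogue in the target group. My first step is to identify the obstruction on the target side: an element $g \in \Homeo_+(\R)$ that is nontrivial must move some point, and hence cannot be an arbitrarily high commutator or root, nor can it be conjugate to a proper power of itself by something that also commutes appropriately. The strategy is therefore to find, for a \emph{given} germ $\gamma$, an algebraic identity satisfied in $\germ$ that forces $\varphi(\gamma)$ to have a fixed point everywhere, i.e.\ to be the identity.

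\begin{proof}[Proof sketch]
First I would recall the relevant structural feature of $\germ$: because two homeomorphisms are identified whenever they agree near $+\infty$, any germ can be represented by a homeomorphism supported on a half-line $[x,\infty)$, and such representatives can be ``shifted'' to the right by conjugating with a translation. Concretely, if $t$ denotes the germ of $x \mapsto x+1$, then conjugation by $t^n$ pushes the support of a germ arbitrarily far out, and the germs $t^{-n}\gamma t^{n}$ all represent elements that agree with $\gamma$ after translation. This yields strong \emph{distortion} and \emph{infinite divisibility}-type relations inside $\germ$.

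The heart of the argument is to locate a specific algebraic phenomenon in $\germ$ that is incompatible with a faithful (or even nontrivial) action on the line. The natural candidate is a configuration of germs realizing an ``infinitely commuting'' or ``mitotic''/infinitely-divisible structure: I would try to produce, for each germ $\gamma$, elements that witness $\gamma$ as conjugate to a power of itself, or exhibit a copy of a group that admits no nontrivial action on $\R$ (for instance using the absence of invariant points for certain commutator products). Under $\varphi$, such a relation forces $\varphi(\gamma)$ to commute with conjugates of itself whose supports are disjoint or nested, and a Hölder/translation-number argument on $\Homeo_+(\R)$ then shows $\varphi(\gamma)$ can have no point that it moves in a sign-definite way, hence must fix every point.

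The main obstacle, which I would tackle most carefully, is manufacturing the right relation: $\Homeo_+(\R)$ is itself very flexible and left-orderable, so a crude divisibility relation alone will not kill an element. The key will be to combine translation-conjugation with the fact that in $\germ$ one may ``forget'' all behavior on any bounded set, producing a relation that has no faithful model on $\R$ because any realization would require an element that is simultaneously positive and negative in the induced order. I expect the decisive step to be showing that the image of the translation germ $t$, together with the image of a bump-type germ, generates a relation whose only solution in $\Homeo_+(\R)$ is trivial -- essentially a commutator or conjugacy identity valid in $\germ$ but obstructed by the existence of a global order on the line. Once $\varphi(\gamma)=\id$ is forced for a generating family of germs, the homomorphism is trivial, completing the proof.
\end{proof}
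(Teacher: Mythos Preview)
Your proposal does not yet contain a proof: it announces that some unspecified algebraic identity in $\germ$ will force $\varphi(\gamma)=\id$, but no such identity is ever written down, and the phrases ``I would try to produce'' and ``I expect the decisive step to be'' are placeholders rather than arguments. More seriously, the underlying strategy cannot succeed as stated. Since $\germ$ is left-orderable, every \emph{countable} subgroup of $\germ$ embeds in $\Homeo_+(\R)$ by the dynamical-realization theorem. Hence there is no relation among finitely (or even countably) many germs that has \emph{no} nontrivial solution in $\Homeo_+(\R)$; any ``commutator or conjugacy identity valid in $\germ$'' that you could write with finitely many letters is automatically realizable on the line. In particular, the relation $aba^{-1}=b^2$ you gesture at (an element conjugate to its square) is perfectly solvable in $\Homeo_+(\R)$ --- the standard affine group already provides solutions --- so by itself it obstructs nothing. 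Your remark that conjugation by $t^n$ ``pushes the support of a germ arbitrarily far out'' also does not give a usable relation: germs have no support in the sense you want, and $t^{-n}\gamma t^n$ is just another germ with no special algebraic constraint.

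The paper's argument works precisely because it does \emph{not} rely on a relation that is unsolvable in $\Homeo_+(\R)$ in the abstract; instead it pins down \emph{how} the relation must be realized. The key ingredients you are missing are: (i) simplicity of $\germ$, so that a nontrivial $\Phi$ is injective; (ii) Militon's lemma, which says that the restriction of $\Phi$ to the subgroup $\Homeo_\Z(\R)$ of integer-periodic homeomorphisms is \emph{topologically diagonal} --- in particular, on each component $I$ of $\R\setminus\fix\Phi(a_1)$ the translation germs $a_t$ act as genuine translations after a change of coordinates; (iii) a hand-built ``nonstandard affine'' subgroup $\langle a_t,b_s\rangle\subset\germ$ with $a_t b_s a_t^{-1}=b_{e^ts}$, where $b_s$ is the germ of $x\mapsto\log(e^x+s)$; and (iv) a delicate lemma showing $\Phi(b_s)$ must also preserve $I$. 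Only the combination of (ii) and (iv) forces the affine relations to hold on $I$ with $a_t$ acting as an honest translation, and it is \emph{that} normalization which has no solution in $\Homeo_+(\R)$ (a Banach contraction argument shows that if $A$ is a translation then $ABA^{-1}=B^n$ forces $B$ to have fixed points, and then a continuity argument rules out those too). Without something playing the role of Militon's rigidity for $\Homeo_\Z(\R)$, there is no mechanism to upgrade an abstract relation into a contradiction.
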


As a consequence, we have
\begin{corollary} \label{cardinality cor}
There exists a left-orderable group with cardinality equal to that of $\Homeo_+(\R)$ that does not embed in $\Homeo_+(\R)$.  
\end{corollary}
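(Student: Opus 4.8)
The plan is to take the group $\germ$ itself as the witness and deduce everything from Theorem \ref{main thm} together with an elementary cardinality count. By Proposition \ref{lo prop} (Navas), $\germ$ is left-orderable, so the only two things left to verify are that $|\germ|$ equals $|\Homeo_+(\R)|$ and that $\germ$ does not embed in $\Homeo_+(\R)$. The second point is immediate: an embedding is in particular an \emph{injective}, hence nontrivial, homomorphism (note $\germ$ is not the trivial group, as it contains for instance the germ of a nonidentity translation), and Theorem \ref{main thm} forbids any nontrivial homomorphism $\germ \to \Homeo_+(\R)$. So the only genuine work is the cardinality.

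For the cardinality, I would first recall that $|\Homeo_+(\R)| = 2^{\aleph_0}$: a homeomorphism is continuous, hence determined by its restriction to $\Q$, giving an injection into the set of functions $\Q \to \R$ and so $|\Homeo_+(\R)| \leq |\R|^{|\Q|} = 2^{\aleph_0}$, while the translations $x \mapsto x+t$ already force $|\Homeo_+(\R)| \geq 2^{\aleph_0}$. For $\germ$, the upper bound $|\germ| \leq 2^{\aleph_0}$ is automatic, since $\germ$ is by definition a quotient (set of equivalence classes) of $\Homeo_+(\R)$. For the lower bound I would exhibit an explicit injection of $\R$ into $\germ$, sending $t \in \R$ to the germ of the translation $x \mapsto x+t$. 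Two translations agree on a half-line $[x,\infty)$ only if they are equal, so distinct values of $t$ yield distinct germs; hence $|\germ| \geq |\R| = 2^{\aleph_0}$. (In fact this map is an injective homomorphism of $(\R,+)$ into $\germ$, though only injectivity is required here.) Combining the two bounds gives $|\germ| = 2^{\aleph_0} = |\Homeo_+(\R)|$.

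Assembling these observations proves the corollary: $\germ$ is a left-orderable group of cardinality $|\Homeo_+(\R)|$ that admits no injective homomorphism into $\Homeo_+(\R)$. I expect no real obstacle in this argument, since all of the difficulty has been front-loaded into Theorem \ref{main thm}. The only point deserving a moment's care is the lower bound on $|\germ|$, i.e.\ producing enough pairwise-inequivalent germs; the translation family settles this cleanly, but one must keep in mind that the operative equivalence is agreement near $+\infty$ only, so that any family of homeomorphisms distinguished by their behavior on a single half-line would serve equally well.
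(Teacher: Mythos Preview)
Your argument is correct and follows the same overall strategy as the paper: invoke Proposition~\ref{lo prop} for left-orderability, Theorem~\ref{main thm} for nonembedding, and then verify the cardinality. The only difference is in how the lower bound $|\germ| \ge |\Homeo_+(\R)|$ is obtained. You compute both cardinalities separately as $2^{\aleph_0}$, using the translation family $t \mapsto (x \mapsto x+t)$ for the lower bound on $|\germ|$. The paper instead exhibits directly an injective homomorphism $\Homeo_+(\R) \hookrightarrow \germ$, by letting each $f \in \Homeo_+(\R)$ act identically (via fixed identifications) on every interval $(n,n+1)$; this yields $|\Homeo_+(\R)| \le |\germ|$ without ever computing either cardinality, and the construction resurfaces later as the subgroup $H_\Z \subset \Homeo_\Z(\R)$. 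Your route is more elementary for the purpose at hand; the paper's route establishes a slightly stronger fact that it reuses.
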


\begin{proof}[Proof of Corollary \ref{cardinality cor}]
By the remarks above, we need only show that $|\germ| = |\R|$.  The natural map $\Homeo_+(\R) \to \germ$ is a surjection.  We can define an injection (in fact an injective homomorphism) $\phi: \Homeo_+(\R) \to \germ$ as follows.  For each $n \in \Z$, and each interval $(n,n+1) \subset \R$, let $i_n: \Homeo_+(\R) \to \Homeo_+(n,n+1)$ be a homeomorphism, and define $\phi(f)$ by 
$$\phi(f)(x) = i_n(f)(x) \, \text { for } x \in (n, n+1).$$    
\end{proof}

\boldhead{Extension vs. realization}
A left-invariant order on a group $G$ induces a left-invariant order on any subgroup of $G$ in a natural way.  Thus, Theorem \ref{realization thm} implies that any \emph{countable} subgroup $\Gamma$ of a left-orderable group $G$ has a dynamical realization whose dynamical properties depend only on the order on $G$.   In this sense, dynamical realizations of subgroups tell us about the order on a group.  

Navas' proof that $\germ$ is orderable (Proposition \ref{lo prop}) is not constructive, so we do not know what a left-invariant order on $\germ$ might look like, or what a dynamical realization of a subgroup might look like.   To address this, Navas asked in particular whether there is an obstruction to realizing a subgroup $\Gamma \subset \germ$ in $\Homeo_+(\R)$ by \emph{extending} it to $\Homeo_+(\R)$ -- giving a homomorphism $\Phi: \Gamma \to \Homeo_+(\R)$ such that  the composition 
$$\Gamma \overset{\Phi}\to \Homeo_+(\R) \overset{\pi}\to \germ$$ is the identity on $\Gamma$.   (Here, and in what follows, $\pi$ denotes the natural map from $\Homeo_+(\R)$ to $\germ$).  

As an application of our techniques, we give a negative answer to Navas' question. 

\begin{proposition} \label{extension prop} 
There exists a finitely generated group $\Gamma \subset \germ$ that admits no extension to $\Homeo_+(\R)$.  
\end{proposition}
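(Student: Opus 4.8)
The plan is to exhibit a finitely generated subgroup $\Gamma \subset \germ$ whose germ-level relations force any purported extension into $\Homeo_+(\R)$ to violate the dynamical constraints that underlie Theorem \ref{main thm}. Let me think about what structure to build.

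The key idea: I want to find finitely many germs satisfying relations that are "flexible at infinity but rigid when lifted." In $\germ$, two homeomorphisms are identified if they agree on some $[x,\infty)$. So I can prescribe germs that satisfy relations like conjugacy or commutation relations that hold only eventually. The obstruction should be that any lift $\Phi$ to honest homeomorphisms must satisfy these relations exactly (as identities in $\Homeo_+(\R)$), and that exact satisfaction is impossible.

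Let me reconstruct the likely mechanism from the main theorem. The proof of Theorem \ref{main thm} presumably uses that $\germ$ contains elements behaving like a "shift" $t$ (germ of $x \mapsto x+1$) together with germs supported near infinity that $t$-conjugates to each other, producing infinitely-divisible or distortion phenomena incompatible with faithful actions on $\R$. The standard trick: find $f, t \in \germ$ with $t f t^{-1} = f^2$ (or some such) as germs — achievable because near infinity one can rescale freely — so that in any extension $\Phi$, the element $\Phi(f)$ would be conjugate to its own square, forcing its translation number / fixed-point behavior to be degenerate, and combined with a second relation one derives a contradiction via the absence of a global fixed point structure.

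So here's my concrete plan. First, I would construct explicit homeomorphisms $a, b, t$ of $\R$ whose germs at infinity satisfy a presentation that is realizable only "eventually." The cleanest candidate is a Baumslag–Solitar-type relation $t a t^{-1} = a^2$ together with a relation pinning the supports, arranged so that $\langle a, b, t\rangle$ as germs is a specific finitely generated group $\Gamma$. The relation $tat^{-1}=a^2$ holds as germs because I only need $a^2$ and $tat^{-1}$ to agree on some $[x,\infty)$, which is easy to arrange by choosing $a$ to be a germ with suitable self-similar scaling behavior near $\infty$ and $t$ the germ implementing that scaling. Next, I would suppose for contradiction that an extension $\Phi: \Gamma \to \Homeo_+(\R)$ exists, so $\pi \circ \Phi = \id_\Gamma$. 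The relations of $\Gamma$ now hold exactly in $\Homeo_+(\R)$ for the images $A = \Phi(a)$, $T = \Phi(t)$, etc. Then I would analyze the dynamics: from $TAT^{-1}=A^2$ one shows $A$ must have fixed points and that conjugation by $T$ collapses them, and by invoking the fixed-point / translation-number analysis used for the main theorem I would derive that $A$ is forced to be trivial on a region while being nontrivial, a contradiction.

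The main obstacle is the design step: I must choose germs so that (i) the defining relations are genuinely satisfiable at the level of germs, (ii) the resulting abstract group $\Gamma$ is finitely generated and has a presentation rigid enough that every exact realization in $\Homeo_+(\R)$ is impossible, and yet (iii) $\Gamma$ does embed in $\germ$ (so the construction is non-vacuous and the germs are not secretly realizable). Reconciling (ii) and (iii) is delicate: the whole point is that the obstruction lives only in the interplay between the germ-relation and genuine homeomorphisms, so I must ensure the relation forcing the contradiction is one that germs-at-infinity can satisfy but globally-defined homeomorphisms cannot, in a way that is stable under the constraint $\pi\circ\Phi=\id$. I expect to import the fixed-point lemmas developed for Theorem \ref{main thm} — in particular the analysis of how germs of elements with fixed points near infinity behave under conjugation — and to show that any extension would let me reconstruct enough of a nontrivial $\germ$-action on the line to contradict Theorem \ref{main thm} directly, reducing Proposition \ref{extension prop} to the main theorem with a carefully chosen finite generating set.
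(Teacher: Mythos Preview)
Your core intuition is right and matches the paper: the engine is a Baumslag--Solitar relation $aba^{-1}=b^2$ realized in $\germ$ by taking $a$ to be the germ of $x\mapsto x+\log 2$ and $b$ the germ of $x\mapsto\log(e^x+1)$, together with the Banach-contraction observation that in $\Homeo_+(\R)$ any $A$ conjugating $B$ to $B^2$ forces $\fix(B)\neq\emptyset$ whenever $A$ is fixed-point free. The paper also throws in $t$, the germ of $x\mapsto x+1$, and uses $[a,t]=1$ exactly as you anticipate: if $\fix(\Phi(t))=\emptyset$ then $\fix(\Phi(a))$ is $\Phi(t)$-invariant and bounded above (extension!), hence empty, and then $\fix(\Phi(b))$ is nonempty and $\Phi(a)$-invariant, hence unbounded, contradicting that $b$ is a fixed-point-free germ.

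The gap in your plan is the other case: what if $\fix(\Phi(t))\neq\emptyset$? Let $x_0$ be its rightmost point. You get $\Phi(a)(x_0)=x_0$ for free from $[a,t]=1$, but nothing in $\langle a,b,t\rangle$ forces $\Phi(b)(x_0)=x_0$, and without that you cannot restrict to $(x_0,\infty)$ and rerun the argument. Your phrase ``a relation pinning the supports'' gestures at this but does not solve it, and the suggested fallback of ``reducing to Theorem~\ref{main thm}'' does not work directly: the main theorem concerns all of $\germ$, not a finitely generated piece, and its proof relies on Militon's lemma for the full $\Homeo_\Z(\R)$. The paper's fix is to add four more generators: germs $g_1,g_2$ of elements of $\Homeo_\Z(\R)$ supported on disjoint $T$-periodic sets $S_1,S_2$, and germs $f_1,f_2$ with $f_1f_2=b$, each $f_i$ pointwise fixing $S_i$ (hence $[f_i,g_i]=1$). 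Then $\fix(\Phi(g_i))\cap(x_0,\infty)$ is a $\Phi(t)$-invariant union of intervals accumulating only at $x_0$, and $[f_i,g_i]=1$ forces $\Phi(f_i)$ to fix that accumulation point, giving $\Phi(b)(x_0)=\Phi(f_1)\Phi(f_2)(x_0)=x_0$. This factorization trick is the missing idea; with it, your outline becomes the paper's proof.
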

This group is described explicitly in Section \ref{applications sec}.

\boldhead{Further applications}
In Section \ref{applications sec} we also show that $\germ$ does not act on the circle, and use this to extend a theorem of E. Militon on actions of groups of homeomorphisms on 1-manifolds.  

\boldhead{Acknowledgements}
I thank Andr\'es Navas for introducing me to this problem and much of the background material.  I would also like to thank the University of Santiago for its hospitality during my visit in April 2014, and Sebastian Hurtado, Andres Navas, and Crist\'obal Rivas for many productive conversations during the visit (which inspired this work) and their contributions to this paper.  Finally, I thank Danny Calegari, Benson Farb, John Franks, and Dave Witte-Morris for their interest in this project and Emmanuel Militon for helpful feedback, especially with Section \ref{applications sec}.

%---------------------------------------------------------------------------------
\section{Properties of $\germ$}
 
In this section we introduce basic properties of $\germ$ and the main tools used in the proof of Theorem \ref{main thm}.  
In addition to showing that $\germ$ is left-orderable, we will show that it is a simple group so any nontrivial homomorphism $\germ \to \Homeo_+(\R)$ is necessarily injective.  
The section concludes with a proof of a ``warm-up" theorem (Proposition \ref{iso prop} below) illustrating some key ideas used in the proof of Theorem \ref{main thm}.   

\subsection{Left-orderability}

We begin with Navas' proof that $\germ$ is left-orderable.  It uses the following well known criterion for left-orderability.  

\begin{proposition} \label{lo criterion}
A group $G$ is left-orderable if and only if, 
for every finite collection of nontrivial elements $g_1, ... , g_k$, there exist choices $\epsilon_i \in \{-1, 1\}$ such that the identity is not an element of the semigroup generated by $\{g_i^{\epsilon_i}\}$.  
\end{proposition}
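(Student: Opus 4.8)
The plan is to prove the two directions separately, the forward implication being routine and the reverse one resting on a compactness argument. Throughout I will use the standard dictionary between left-invariant orders and \emph{positive cones}: a subset $P \subseteq G \setminus \{e\}$ such that (i) $G \setminus \{e\}$ is the disjoint union of $P$ and $P^{-1} = \{g^{-1} : g \in P\}$, and (ii) $P$ is closed under multiplication. Given such a $P$, the relation $g \leq h \iff (g = h \text{ or } g^{-1}h \in P)$ is a left-invariant total order whose positive cone $\{g : g > e\}$ is exactly $P$, and conversely every left order arises this way. I will freely pass between a positive cone $P$ and its sign function $\sigma \colon G \setminus \{e\} \to \{+1,-1\}$, where $\sigma(g) = +1$ precisely when $g \in P$.

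For the forward direction, suppose $G$ carries a left order with positive cone $P$. Given nontrivial $g_1, \dots, g_k$, choose $\epsilon_i$ so that $g_i^{\epsilon_i} \in P$; this is possible since exactly one of $g_i, g_i^{-1}$ lies in $P$. Because $P$ is a multiplicative semigroup not containing $e$, the semigroup generated by $\{g_i^{\epsilon_i}\}$ is contained in $P$ and hence omits the identity. This step is immediate.

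The content is in the reverse direction. Here I would first observe that a sign function $\sigma$ defines a positive cone precisely when (a) $\sigma(g^{-1}) = -\sigma(g)$ for all $g \neq e$, and (b) no product $x_1 \cdots x_n = e$ (with $n \geq 1$ and each $x_t \neq e$) has all $\sigma(x_t) = +1$; condition (b) encodes that $P$ is a sub-semigroup avoiding the identity. I would then work in the compact space $\Omega = \{+1,-1\}^{G \setminus \{e\}}$ (compact by Tychonoff), where each instance of (a) and of (b) cuts out a closed set depending on only finitely many coordinates. The set of valid sign functions is the intersection of all these closed sets, so by the finite intersection property it suffices to show that every finite subfamily of the conditions is simultaneously satisfiable by some point of $\Omega$.

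This last point is exactly where the hypothesis enters, and it is the crux of the argument. Any finite subfamily of conditions involves only finitely many group elements; I would collect these into a finite symmetric set $\{g_1, \dots, g_k\}$ and apply the hypothesis to obtain signs $\epsilon_i$ for which the semigroup generated by $\{g_i^{\epsilon_i}\}$ omits $e$. Setting $\sigma(g_i) = \epsilon_i$ and extending arbitrarily to the rest of $G$ then furnishes the required point: condition (b) holds because any all-positive product equal to $e$ would lie in that semigroup, and condition (a) holds because the hypothesis forces $\epsilon_i = -\epsilon_j$ whenever $g_j = g_i^{-1}$, as otherwise $g_i g_i^{-1} = e$ would already belong to the generated semigroup. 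Compactness then produces a global $\sigma$, hence a positive cone, hence a left order. The one point demanding care is precisely this verification that the hypothesis delivers finite consistency compatibly with the antisymmetry condition (a); once that is in hand, the topological packaging via the finite intersection property is formal.
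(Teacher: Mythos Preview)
Your argument is correct. The paper itself only spells out the forward implication and defers the sufficiency direction to Proposition~1.4 of \cite{Navas}; your compactness argument via positive cones and the finite intersection property on $\{+1,-1\}^{G\setminus\{e\}}$ is precisely the standard proof found there, so you are filling in exactly what the paper cites rather than taking a different route.
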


It is obvious that this condition is necessary -- if $G$ is left orderable, then we can choose $\epsilon_i \in \{-1, 1\}$ such that $g_i^{\epsilon_i} > \id$ holds for each $i$, and this will satisfy the requirement above.  It is a bit more work to show the condition is sufficient; we refer the reader to Prop. 1.4 of \cite{Navas} for a proof.

\begin{proposition}[Navas]  \label{lo prop}
$\germ$ is left-orderable.  
\end{proposition}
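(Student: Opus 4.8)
The plan is to show that $\germ$ acts faithfully by order-preserving bijections on a suitable linearly ordered set, and then to manufacture a left-order from this action by a standard lexicographic construction. The whole point of the main theorem is that $\R$ itself cannot serve as this ordered set: a germ may cross the identity infinitely often near infinity (think of the germ of $x \mapsto x + \sin x$, whose fixed points $n\pi$ accumulate at $\infty$), so there is no coherent ``eventual sign of $f(x)-x$.'' The idea is to dissolve this oscillation by passing to an ultrapower of $\R$, in which a tail comparison becomes genuinely total.

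Concretely, I would fix a non-principal ultrafilter $\mathcal{U}$ on $\N$ and let $\Omega$ be the set of equivalence classes $[x_n]$ of sequences $(x_n)$ of reals with $x_n \to \infty$, where $(x_n) \sim (y_n)$ whenever $\{n : x_n = y_n\} \in \mathcal{U}$. Ordering $\Omega$ by $[x_n] \le [y_n]$ iff $\{n : x_n \le y_n\} \in \mathcal{U}$ gives a total order, with trichotomy and transitivity coming from the fact that $\mathcal{U}$ is an ultrafilter closed under finite intersections and supersets. A germ $g$, represented by $f \in \Homeo_+(\R)$, acts by $g \cdot [x_n] = [f(x_n)]$; since $f(x) \to \infty$ as $x \to \infty$ this lands back in $\Omega$, since $f$ is increasing it preserves $\le$, and $g^{-1}$ supplies the inverse bijection.

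The two steps that carry the real content are well-definedness and faithfulness. For well-definedness the action must depend only on the germ: if $f$ and $f'$ agree on $[N,\infty)$ and $x_n \to \infty$, then $x_n > N$ for all large $n$, hence $f(x_n) = f'(x_n)$ on a set of $n$ lying in $\mathcal{U}$, so $[f(x_n)] = [f'(x_n)]$. This is precisely why $\Omega$ is restricted to sequences tending to infinity — on a bounded sequence the action would detect the representative rather than the germ. For faithfulness I use that a nontrivial germ $g$ has $f(x) \ne x$ on a set of $x$ unbounded above; choosing an increasing $x_n \to \infty$ inside that set yields $f(x_n) \ne x_n$ for every $n$, so $g \cdot [x_n] \ne [x_n]$ and $g$ acts nontrivially.

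With a faithful order-preserving action in hand, I would finish by the usual lexicographic device. Well-order $\Omega$; given $g \ne h$ in $\germ$, faithfulness applied to $h^{-1}g$ produces some $\omega \in \Omega$ with $g\cdot\omega \ne h\cdot\omega$, so there is a least-indexed such $\omega_0$, and I set $g < h$ iff $g\cdot\omega_0 < h\cdot\omega_0$ in $\Omega$. Totality comes from faithfulness, left-invariance from the fact that left translation by $f \in \germ$ is an order-automorphism of $\Omega$ and does not change the least differing index, and transitivity is the routine check for any lexicographic order; this is a left-invariant total order on $\germ$, which in particular meets the criterion of Proposition \ref{lo criterion}. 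I expect the main obstacle to be conceptual rather than computational: one must recognize that the very oscillation which will later forbid an honest action on $\R$ is exactly what forces the move to the ultrapower $\Omega$, and then verify carefully that the passage to sequences tending to infinity is what makes the action descend from $\Homeo_+(\R)$ to $\germ$.
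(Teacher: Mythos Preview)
Your argument is correct but follows a genuinely different route from the paper's. The paper verifies the finite criterion of Proposition~\ref{lo criterion} directly: given nontrivial germs $g_1,\dots,g_k$ with representatives $f_i$, it repeatedly extracts subsequences of points $x_{j,n}\to\infty$ along which the sign of $f_i(x_{j,n})-x_{j,n}$ stabilizes, reads off $\epsilon_i$ from that sign, and checks that no semigroup word in the $f_i^{\epsilon_i}$ can be the identity because it strictly increases along one of the sequences. Your approach instead packages all such subsequence extractions at once by passing to an ultrapower of $\R$ along a nonprincipal ultrafilter, restricted to sequences tending to infinity so that the action descends to germs; this yields a single linearly ordered set on which $\germ$ acts faithfully, after which the standard well-order-and-lexicograph trick finishes. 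The two are cousins --- the ultrafilter is doing globally exactly what the paper's iterated subsequence passages do one finite collection at a time --- but the trade-offs differ: the paper's proof is more elementary and entirely hands-on, while yours gives a cleaner conceptual picture (an explicit ordered $\germ$-set) at the cost of leaning more visibly on choice, both for the ultrafilter and for the well-ordering of $\Omega$.
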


\begin{proof}
We use the criterion in Proposition \ref{lo criterion}.  Let $\{ g_1, g_2, ... ,g_k \}$ be a finite subset of nontrivial elements of $\germ$, and choose homeomorphisms $f_1, ... ,f_k$ such that the germ of $f_i$ is $g_i$.  

Let $\{ x_{1,n} \}$ be a sequence of points with $\lim \limits_{n \to \infty} x_{1,n} = \infty$, and such that no point $x_{1,n}$ is fixed by every homeomorphism $f_i$. 
After passing to a subsequence, we may assume for each of the $i$ that either $f_i(x_{1,n}) > x_{1,n}$ holds for all $n$, or $f_i(x_{1,n}) < x_{1,n}$ holds for all $n$, or $f_i(x_{1,n}) = x_{1,n}$ holds for all $n$.  
In the first case we let $\epsilon_i = +1$, in the second let $\epsilon_i = -1$, and in the third leave $\epsilon_i$ undefined.  Note that the condition that no point $x_{1,n}$ was fixed by every $f_i$ implies that we have defined at least one $\epsilon_i$.  

Provided some $\epsilon_i$ are still undefined, consider the set of $f_i$ for which $\epsilon_i$ is undefined, and repeat the procedure described above for these homeomorphisms -- take a sequence $\{ x_{2,n} \}$ with $\lim \limits_{n \to \infty} x_{2,n} = \infty$ such that no point is fixed by each of these $f_i$, pass to a subsequence as above, and define $\epsilon_i$ depending on whether $f_i(x_{2,n}) > x_{2,n}$ holds for all $n$, or $f_i(x_{2,n}) < x_{2,n}$ holds for all $n$.  If for some $i$,  $f_i(x_{2,n}) = x_{2,n}$ holds for all $n$, leave these $\epsilon_i$ undefined, and repeat the procedure again.   The process terminates after at most $k$ steps.

Note that, by construction, $f_i^{\epsilon_i}(x_{j,n}) \geq x_{j,n}$ for all $i, j$ and $n$.  Moreover, for each $i$ there exists $j$ such that $f_i^{\epsilon_i}(x_{j,n}) > x_{j,n}$ holds for \emph{all} $n$.  This implies that, for any word $f$ in the semigroup generated by $\{f_i^{\epsilon_i}\}$, there exists $j$ such that $f(x_{j,n}) > x_{j,n}$ for all $n$.  Since $\lim \limits_{n \to \infty} x_{j,n} = \infty$, the germ of $f$ is nontrivial.  
\end{proof}

\subsection{Simplicity} 
 
Our next goal is to prove the following.
\begin{proposition} \label{simple prop}
$\germ$ is a simple group.
\end{proposition}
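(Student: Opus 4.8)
The plan is to show that the normal closure of any nontrivial element is all of $\germ$. Since every nontrivial normal subgroup $N \trianglelefteq \germ$ contains some nontrivial $h$, and the normal closure of $h$ is contained in $N$, this yields simplicity: it forces $N = \germ$. So I fix a nontrivial $h \in N$ and an arbitrary target $[g] \in \germ$, and the goal is to express $[g]$ as a product of $\germ$-conjugates of $h^{\pm 1}$.

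The first step is a displacement lemma in the spirit of the proof of Proposition \ref{lo prop}. Since $h$ is a nontrivial germ, the open set $\{x : h(x) \neq x\}$ meets every neighborhood of $+\infty$. Choosing a point $x_0$ as large as desired with $h(x_0)\neq x_0$, and replacing $h$ by $h^{-1} \in N$ if necessary, continuity and monotonicity of a representative let me extract an interval $I=(a,b)$ with $a$ arbitrarily large and $h(I)\cap I = \emptyset$. In fact I can produce infinitely many pairwise disjoint such displaced intervals $I_1, I_2, \dots$ whose left endpoints tend to $+\infty$. The essential feature I intend to exploit throughout is that any homeomorphism of \emph{bounded} support represents the trivial germ, so I am free to discard every error term that lives on a bounded subset of $\R$.

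With these in hand, the heart of the argument is a commutator / infinite-swindle construction. The model identity is the following: if $s$ displaces an interval $J$ so that the translates $s^n(J)$ are pairwise disjoint, then for any $u$ supported in $J$ the telescoping relation $s\bigl(\textstyle\prod_{n\geq 0} s^n u s^{-n}\bigr)s^{-1} = u^{-1}\prod_{n\geq 0} s^n u s^{-n}$ expresses $u$ as a commutator involving $s$. My plan is to run this kind of swindle not on a single bounded interval but along a family of displaced intervals escaping to $+\infty$, so that the resulting product has nontrivial germ; combined with a fragmentation of a representative of $g$ into pieces supported on intervals marching out to infinity, and with the freedom to absorb bounded-support discrepancies, this should realize $[g]$ as a product of conjugates of $h^{\pm 1}$ lying in $N$. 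A natural intermediate target is to first place a \emph{fixed-point-free} germ $t$ (one with $t(x)>x$ for all large $x$, hence conjugate in $\germ$ to a translation) inside $N$, after which the translates of a fundamental domain tile a neighborhood of infinity and the swindle deposits genuinely nontrivial germs.

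\textbf{The step I expect to be the main obstacle} is precisely the interface between these finite group-theoretic operations and the non-compactness at infinity. The usual commutator tricks, applied naively, produce only bounded-support elements, which are trivial in $\germ$; and a finite product of conjugates of a ``bump'' such as $h$ is always the identity beyond its rightmost bump, so it can never by itself be fixed-point-free near $+\infty$. Worse, a nontrivial germ $h$ may oscillate so badly that every orbit $\{h^n(x)\}$ stays bounded, so no single $h$-orbit escapes to infinity along which to run the swindle. The real content of the proof is therefore to combine contributions at infinitely many scales into a single honest element of $\germ$ while remaining inside $N$ — that is, to leverage the germ quotient (killing bounded support) to realize what is morally an infinite product as a finite product of conjugates of $h$ up to trivial corrections. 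Making this bookkeeping precise, using the sequence $I_1, I_2, \dots$ together with conjugators carrying one displaced interval onto another, is the crux I would expect to occupy the bulk of the proof.
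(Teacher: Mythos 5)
Your high-level strategy matches the paper's --- reduce to showing the normal closure of a nontrivial $h$ contains a fixed-point-free germ, then finish --- and you correctly diagnose the obstacle: a finite product of conjugates of a ``bump'' is trivial past its rightmost bump, orbits of $h$ itself may stay bounded, and the infinite swindle only manufactures bounded-support elements, which are trivial germs. But you stop exactly there. The step you flag as ``the crux I would expect to occupy the bulk of the proof'' is the actual content of the proposition, and your sketch does not supply it, so there is a genuine gap. The paper's resolution is a single short construction you are missing: pick $x_n \to \infty$ with $h(x_n) > x_n$ and (after passing to a subsequence) $h(x_n) < x_{n+1}$, then choose one auxiliary homeomorphism $g$ with $g(x_n) = h(x_n)$ and $g(h(x_n)) = x_{n+1}$. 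The element $hghg^{-1} = h\cdot(ghg^{-1})$ is a product of two conjugates of $h$, hence lies in $\mathcal{N}$, and it carries $[h(x_n), h(x_{n+1}))$ onto $[h(x_{n+1}), h(x_{n+2}))$, so it is fixed point free on $[x_1,\infty)$. The point is that the escaping orbit along which everything is chained together is not an $h$-orbit but is built into the conjugator $g$; this is how ``infinitely many scales'' get combined into a finite product of conjugates, and it sidesteps the swindle entirely.

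The second half of your plan (fragmenting a representative of $g$ into pieces marching to infinity and running a swindle on each) is also unnecessary and would face the same bounded-support problem. Once $\mathcal{N}$ contains one fixed-point-free germ it contains all of them, since by Fact \ref{basic fact} there are only two conjugacy classes of such germs (increasing and decreasing near $\infty$). Then \emph{every} germ is a product of two fixed-point-free germs: for arbitrary $f$ set $f_2(x) = \max\{f^{-1}(x)+1,\, x+1\}$ near $+\infty$, so that both $f_2$ and $f_2 f$ move every large point strictly to the right, and $f = f_2^{-1}(f_2 f)$. No fragmentation or infinite product is needed anywhere.
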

This result is essentially due to Fine and Schweigert \cite{FS}, who give a complete classification of all normal subgroups of $\Homeo_+(\R)$. 
Since we do not need the full classification, we'll give a much shorter, self-contained proof that $\germ$ is simple here.  Our proof builds on the following elementary fact.

\begin{fact} \label{basic fact}
Any pair of homeomorphisms $f_1, f_2 \in \Homeo_+[0,1]$ satisfying 
$$f_i(x) > x \text{  for all  } x \in (0,1)$$
are conjugate in $\Homeo_+[0,1]$.
\end{fact} 

Germs with the simplest possible dynamics are \emph{fixed point free}. 
\begin{definition} A germ $g \in \germ$ is \emph{fixed point free} if there exists a homeomorphism $f$ with germ $g$, and an interval $[x, \infty)$ such that $f(y) \neq y$ for all $y \in [x, \infty)$.
\end{definition}

\noindent It is a consequence of Fact \ref{basic fact} there are precisely two conjugacy classes of fixed point free germs: those that have representative homeomorphisms that are strictly increasing on some neighborhood of $\infty$, and those with representatives that are strictly decreasing on some neighborhood of $\infty$.  

\bigskip
Using fixed point free germs, we now prove that $\germ$ is simple.  

\begin{proof}[Proof of Proposition \ref{simple prop}]

Suppose $\mathcal{N} \subset \germ$ is a nontrivial normal subgroup. 

\begin{lemma} \label{fix point free lemma}
$\mathcal{N}$ contains a fixed point free germ.  
\end{lemma}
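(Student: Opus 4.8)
The plan is to start from an arbitrary nontrivial $g \in \mathcal{N}$, fix a representative homeomorphism $f$, and produce a fixed point free germ in $\mathcal{N}$ by multiplying $f$ by a single conjugate of itself. Since $g \neq \id$, the set of points moved by $f$ is unbounded, so after replacing $g$ by $g^{-1}$ if necessary (which stays in $\mathcal{N}$) I may assume there are arbitrarily large $x$ with $f(x) > x$. Let $(a,b)$ be the connected component of $\{x : f(x) > x\}$ containing one such point. If $b = \infty$ then $f > \id$ on a neighborhood of $\infty$, so $g$ is already fixed point free and we are done. The remaining, essential, case is when the fixed points of $f$ accumulate at $\infty$: near infinity the line is then partitioned by fixed points into maximal intervals on each of which $f - \id$ has a constant sign, giving push-up intervals, push-down intervals, and pointwise-fixed intervals all limiting to $\infty$.

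The core step is to cancel the downward motion using normality. I would choose a homeomorphism $\phi$ so that the conjugate $v := \phi f \phi^{-1} \in \mathcal{N}$ has its push-up intervals positioned exactly over the push-down and fixed intervals of $f$, and symmetrically its push-down intervals landing inside the push-up intervals of $f$; concretely $\phi$ shifts the interval structure of $f$ by one step. Conjugation preserves the up/down/fixed type of each interval but lets me prescribe the magnitude of the motion: by stretching or compressing $\phi$ near each interval, and invoking Fact \ref{basic fact} to realize the restricted dynamics by a model push-up map of any prescribed strength, I can make the push-ups of $v$ strong enough to dominate the downward displacement of $f$ beneath them while making the push-downs of $v$ weak enough to be dominated by the upward displacement of $f$. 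Arranging in addition that the intervals overlap, so that $f$ and $v$ share no fixed point near $\infty$, the product $fv$ then satisfies $fv(x) > x$ for all large $x$: on the push-down and fixed intervals of $f$ the strong push-up of $v$ wins, on the push-up intervals of $f$ the motion of $f$ dominates the weak push-down of $v$, and the overlaps remove any would-be common fixed point. Hence $fv$ represents a fixed point free germ lying in $\mathcal{N}$.

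If it is cleaner to guarantee only that $fv \geq \id$ near infinity rather than strict inequality, I can finish with one more application of normality: the points moved by such a nonnegative germ $u$ form open intervals limiting to $\infty$, and choosing a homeomorphism $\psi$ whose conjugate $\psi u \psi^{-1} \in \mathcal{N}$ has support covering the complementary fixed intervals, the product $u \cdot \psi u \psi^{-1}$ is again $\geq \id$ and, by the elementary observation that a composition of two germs $\geq \id$ strictly exceeds the identity wherever either factor does, moves every point near $\infty$.

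The main obstacle is the construction of the single homeomorphism $\phi$: it must simultaneously realize the combinatorial interleaving of push-up and push-down intervals, the two-sided magnitude domination, and the overlap conditions that prevent common fixed points. I expect Fact \ref{basic fact} to carry the analytic weight here, since it lets me model the dynamics on each interval by a standard map of controllable strength and thereby reduce the domination estimates to a comparison of explicit model maps; the bookkeeping of these choices along the infinitely many intervals marching to infinity is where the real work lies.
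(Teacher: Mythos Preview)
Your strategy---take a nontrivial element of $\mathcal{N}$ and multiply it by a single conjugate to obtain a fixed-point-free germ---is exactly the paper's. But the paper's execution sidesteps the entire interval-structure analysis you outline. Rather than tracking push-up, push-down, and fixed intervals and engineering $\phi$ to interleave them with controlled magnitudes, the paper simply picks a sparse sequence $x_1 < x_2 < \cdots \to \infty$ with $h(x_n) > x_n$ and $h(x_n) < x_{n+1}$, and then defines the conjugator $g$ by the two conditions $g(x_n) = h(x_n)$ and $g(h(x_n)) = x_{n+1}$ (extended arbitrarily in between). A one-line computation gives
\[
h g h g^{-1}\bigl(h(x_n)\bigr) \;=\; h g h(x_n) \;=\; h(x_{n+1}),
\]
so $hghg^{-1}$ carries each interval $[h(x_n),\,h(x_{n+1}))$ onto $[h(x_{n+1}),\,h(x_{n+2}))$ and is therefore strictly above the identity on all of $[h(x_1),\infty)$. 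No case analysis, no magnitude domination, no appeal to Fact~\ref{basic fact}, no fallback step.

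Your concrete proposal that ``$\phi$ shifts the interval structure of $f$ by one step'' does not work as stated. If the type sequence of $f$ near $\infty$ is, say, $U,D,D,U,D,D,\ldots$, then the conjugate $v$ has the same type sequence, and a one-step shift places the second interval of $v$ (type $D$) over the third interval of $f$ (also type $D$); there $fv$ pushes down. The idea is salvageable if you coarsen the partition---group each maximal run of non-$U$ intervals into a single block $C_k$ between consecutive $U_k$'s, and build $\phi$ so that $\phi(C_k)\subset U_{k+1}$ and $\phi(U_k)\supset C_k$---after which Fact~\ref{basic fact} does let you realize any desired $>\id$ map on each $\phi(U_k)$ and hence dominate $f^{-1}$ on $C_k$. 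But this is precisely the ``real work'' you defer, and the paper's insight is that none of it is needed: a well-chosen sequence of \emph{points} replaces the whole interval bookkeeping.
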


\begin{proof}
Let $h$ be a homeomorphism with germ a nontrivial element of $\mathcal{N}$.  Then (perhaps after replacing $h$ with its inverse) there is a sequence of points $x_1, x_2, x_3, ... $ with $\lim \limits_{n \to \infty} x_n = \infty$ and such that $h(x_n) > x_n$.  After passing to a subsequence if necessary, we can also assume that $h(x_n) < x_{n+1}$.  Let $g \in \Homeo_+(\R)$ be a homeomorphism such that $g(x_n) = h(x_n)$ and $g(h(x_n)) = x_{n+1}$ holds for each $n$.  We claim that $h ghg^{-1}$ has fixed point free germ at infinity -- in fact, we will show that $h ghg^{-1}(x) > x$ for all $x \geq x_1$.  
By construction,
$$h ghg^{-1}(h(x_n)) = hghg^{-1}(g(x_n)) = hgh(x_n) = h(x_{n+1}),$$
so $h ghg^{-1} \big( \left[ h(x_n), \, h(x_{n+1})\right) \big) = \left[h(x_{n+1}), \,h(x_{n+2}) \right)$, which shows that $h ghg^{-1}(x) > x$ for $x \geq x_1$.
\end{proof}

Since all fixed point free germs are conjugate either to $h ghg^{-1}$ or its inverse, it follows that $\mathcal{N}$ contains \emph{all} fixed point free germs.  
Now we can easily show that $\mathcal{N} = \germ$.  Let $f$ be any homeomorphism of $\R$.   Let $f_2$ be defined on $[0, \infty)$ by 
$$f_2(x) = \max\{f^{-1}(x) + 1, x+1\} \text{ for }x \in [0, \infty).$$   Then $f_2$ can be extended to a homeomorphism $\R \to \R$, and will satisfy $f_2(x) > x $ for all $x > 0$ and $f_2 f(x) > x $ for all $x>0$.  Thus, the germs of both $f_2$ and $f_2 f$ are fixed point free and lie in $\mathcal{N}$,
so the germ of $f$ lies in $\mathcal{N}$ as well, which is what we needed to show.  

\end{proof}

%----------------------
\subsection{A warm-up theorem: $\germ \ncong \Homeo_c(\R)$}
As a warm-up to the proof of Theorem \ref{main thm}, and to introduce some important techniques, we give a short proof of the following strictly weaker result.  Recall that $\Homeo_c(\R)$ denotes the group of homeomorphisms with compact support. 

\begin{proposition} \label{iso prop}
$\germ$ is not isomorphic to $\Homeo_c(\R)$.  
\end{proposition}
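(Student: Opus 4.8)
The plan is to turn the abstract isomorphism into dynamical data and then exploit the one structural feature that separates the two groups: every element of $\Homeo_c(\R)$ has \emph{bounded} support, whereas a germ with a bounded-support representative is already trivial. Suppose for contradiction that $\phi\colon\germ\to\Homeo_c(\R)$ is an isomorphism. Composing with the inclusion $\Homeo_c(\R)\hookrightarrow\Homeo_+(\R)$ gives a faithful homomorphism $\rho\colon\germ\to\Homeo_+(\R)$ whose image consists of compactly supported homeomorphisms. For $g\in\germ$ set $r(g)=\sup\supp(\rho(g))\in[-\infty,\infty)$, the right endpoint of the support, with $r(\id)=-\infty$. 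Since a point fixed by both $\rho(g)$ and $\rho(h)$ is fixed by $\rho(gh)$, we have $\supp(\rho(gh))\subseteq\supp(\rho(g))\cup\supp(\rho(h))$, so $r(gh)\le\max\{r(g),r(h)\}$ and $r(g^{-1})=r(g)$. Hence $H_n:=\{g: r(g)\le n\}=\phi^{-1}\big(\{F\in\Homeo_c(\R): \supp(F)\subseteq(-\infty,n]\}\big)$ is a subgroup for each $n\in\N$; it is proper because $\Homeo_c(\R)$ has elements supported to the right of $n$, and $\germ=\bigcup_{n}H_n$ is an increasing union because every compactly supported homeomorphism has finite support. Thus an isomorphism would exhibit $\germ$ as a countable increasing union of proper subgroups.

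The core of the argument is therefore to prove that $\germ$ admits no such exhaustion: for any increasing chain $H_1\le H_2\le\cdots$ with $\bigcup_n H_n=\germ$, some $H_n$ already equals $\germ$ (that is, $\germ$ has uncountable cofinality). Here I would use the tools assembled above — simplicity (Proposition \ref{simple prop}), and the fact (a consequence of Fact \ref{basic fact}) that the fixed point free germs form a single pair of conjugacy classes whose normal closure is all of $\germ$ — together with an infinite swindle carried out \emph{at the single end}. The point is that, unlike in $\Homeo_c(\R)$, there is unlimited room at infinity: one can partition a neighborhood of $\infty$ into countably many pairwise disjoint subsets $S_1,S_2,\dots$, each accumulating at $\infty$, so that germs with representatives supported on distinct $S_k$ commute. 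Unlike a bounded interval, each $S_k$ supports nontrivial germs, and this is the asymmetry with $\Homeo_c(\R)$ I want to exploit. Given a chain $\bigcup_n H_n=\germ$, I would, for each $n$, choose $c_n\in\germ\setminus H_n$ and use the slot $S_n$ to build a single germ $w$ that simultaneously ``carries'' all the $c_n$; if $w$ is arranged so that each $c_n$ is recovered from $w$ by a fixed, bounded recipe of group operations, then $w\in H_N$ would force $c_N\in H_N$, the desired contradiction.

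The main obstacle is precisely this last step. Bounded pieces of a germ are invisible, so both the encoding of the $c_n$ and their subsequent extraction must be arranged entirely through the unbounded sub-ends $S_k$ rather than through bounded intervals, and one must verify that recovering each $c_n$ from $w$ uses only conjugation by a single auxiliary germ and commutators (so that it never leaves whatever $H_N$ contains $w$). I expect the cleanest implementation to fix once and for all a ``shift'' germ $t$ (a fixed point free germ conjugate to $x\mapsto x+1$) and to use conjugation by $t$ to slide the sub-ends, realizing the extraction as a uniformly bounded sequence of conjugations and commutators. This swindle — trading on the absence of bounded-support elements and the self-similarity of a single end — is the technique I would reuse, in sharpened form, to rule out all actions of $\germ$ on $\R$ in Theorem \ref{main thm}.
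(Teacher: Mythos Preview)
Your route is quite different from the paper's and, as written, has a genuine gap at exactly the step you yourself flag as ``the main obstacle.''  The paper never invokes cofinality.  Instead it composes $\Homeo_\Z(\R)\xrightarrow{\pi}\germ\xrightarrow{\Phi}\Homeo_c(\R)$ and applies Militon's Lemma~\ref{militon lemma}: this composition is a topologically diagonal embedding, so its image $G$ is supported inside the single compact interval $I$ carrying $\Phi(t)$.  Any compactly supported homeomorphism that fixes $I$ pointwise then centralizes $G$ without lying in $G$.  Transporting back through the isomorphism $\Phi$ produces a germ centralizing $\pi(\Homeo_\Z(\R))$ but not belonging to it, contradicting Lemma~\ref{centralizer lemma}.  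That argument is short and complete, and it is the centralizer comparison (not a cofinality mismatch) that gets reused for Theorem~\ref{main thm}.

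Your proposed swindle, by contrast, is not carried out, and there is a concrete obstruction to the most natural implementation.  To ``use the slot $S_n$'' to carry an arbitrary $c_n\in\germ$, the obvious move is to conjugate $c_n$ into the group of germs supported on $S_n$.  But this is impossible in general: if $c_n$ is fixed point free (e.g.\ the germ of $x\mapsto x+1$) then so is every conjugate of $c_n$, whereas any germ with a representative supported on a disjoint union of bounded intervals accumulating at $\infty$ has fixed points in every gap.  So the encoding cannot be by conjugation, and you give no alternative mechanism---nor the promised ``fixed, bounded recipe'' for recovering $c_n$ from the single germ $w$ and a fixed auxiliary.  Until that is supplied, the uncountable-cofinality claim for $\germ$ remains unproved and the argument does not close.  (The claim may be true, but establishing it is real work beyond what you have sketched; the paper sidesteps the question entirely.)
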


\begin{remark}  \label{simple rk}
It is clear that $\germ$ is not isomorphic to $\Homeo_+(\R)$, since $\germ$ is simple and $\Homeo_+(\R)$ is not simple -- in fact $\Homeo_c(\R)\subset \Homeo_+(\R)$ is a normal subgroup.   However, $\Homeo_c(\R)$ \emph{is} a simple group, so simplicity provides no obstruction to an isomorphism.    
Proving simplicity of $\Homeo_c(\R)$ is actually not too difficult -- a nice exposition (for the case of $\Homeo_+(S^1)$, but the $\Homeo_c(\R)$ case is analogous) can be found in \cite{Ghys}.
\end{remark}

To prove Proposition \ref{iso prop} we will look at the actions of a particular subgroup, $\Homeo_\Z(\R)$.  This group also plays an important role in the proof of Theorem \ref{main thm}.

\begin{definition}
Let $T$ denote the translation $x \mapsto x +1$.  The group $\Homeo_\Z(\R)$ is the centralizer of $T$ in $\Homeo_+(\R)$. 
\end{definition}

The reader may notice that a group quite similar to $\Homeo_\Z(\R)$ has already made an appearance in Corollary \ref{cardinality cor}.   More precisely, let $H_\Z \subset \Homeo_\Z(\R)$ be the subgroup consisting of homeomorphisms that pointwise fix the integers.  Then $H_\Z$ is naturally isomorphic to $\Homeo_+(\R)$, and the natural map $\Homeo_+(\R) \cong H_\Z \overset{\pi}\to \germ$ is an example of an injective homomorphism just as described in the proof of Corollary \ref{cardinality cor}.  

\bigskip
The key to our proof of Proposition \ref{iso prop} (and also of Theorem \ref{main thm}) is a lemma of Militon, which states that all actions of $\Homeo_\Z(\R)$ on the line have a standard form.  We call this form \emph{topologically diagonal}.  

\begin{definition}
A \emph{topologically diagonal embedding} of a group $G \subset \Homeo_+(\R)$ is a homomorphism $\phi: G \to \Homeo_+(\R)$ defined as follows.   Choose a collection of disjoint open intervals $I_n \subset \R$ and homeomorphisms $f_n: \R \to I_n$.  Define $\phi$ by
$$\phi(g)(x) =	\left\{ \begin{array}{rl} f_n g f_n^{-1}(x) & \mbox{if $x \in I_n$}
 \\ x & \mbox{otherwise} 
\end{array}\right.
$$
\end{definition}

\begin{lemma}[Militon; Lemma 5.1 in \cite{Militon}] \label{militon lemma}
Let $\phi: \Homeo_\Z(\R) \to \Homeo_+(\R)$ be a nontrivial homomorphism. Then $\phi$ is a topologically diagonal embedding.  
\end{lemma}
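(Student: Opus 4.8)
The plan is to recover the standard action of $G := \Homeo_\Z(\R)$ on $\R$ from its internal algebra and then match it against $\phi$. Recall that $G$ is the universal cover of $\Homeo_+(S^1)$: it has center $\langle T\rangle$, and $G/\langle T\rangle \cong \Homeo_+(S^1)$ is simple. The basic building blocks are the \emph{local subgroups}: for a proper open arc $U \subsetneq S^1 = \R/\Z$, let $G_U \le G$ consist of the $\Z$-periodic homeomorphisms supported in the preimage of $U$. Each $G_U \cong \Homeo_c(\R)$ is simple (Remark~\ref{simple rk}); moreover $G_U \le G_V$ when $U \subseteq V$, and $G_U$ commutes with $G_V$ when $U \cap V = \emptyset$. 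By fragmentation on the circle, the $G_U$ together with $T$ generate $G$. I would first show $\phi$ is injective: $\phi$ is nontrivial on some $G_U$ (otherwise it factors through $G/\langle\langle\, \bigcup_U G_U\, \rangle\rangle$, a cyclic quotient that is trivial because $G$ is perfect), hence faithful there by simplicity, hence faithful on every $G_U$ since any two equal-length arcs are interchanged by an element of $G$; a commutator argument with $\ker\phi$ normal then gives $\ker\phi = 1$.

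Next I would locate the image. Set $C := \fix(\phi(G))$ and let $\{I_n\}$ be the components of $\R \setminus C$, a $\phi(G)$-invariant family of disjoint open intervals. The first task is to show each $I_n$ is invariant under $\phi(G)$, not merely permuted: the permutation action on $\{I_n\}$ preserves the linear order of the components, and I would rule out any nontrivial permutation using the centrality of $T$ (its image commutes with all of $\phi(G)$, so it must act compatibly on the whole family) together with perfectness of $G$. Granting this, the restrictions $\phi_n : G \to \Homeo_+(I_n)$ are the objects to understand, and each nontrivial $\phi_n$ is faithful by the argument above. The commutation and nesting relations among the $G_U$ then transport into $I_n$: I would analyze how the supports of the $\phi_n(G_U)$ sit inside $I_n$, using the disjoint-arc commuting relations, the inclusions for nested arcs, and the simplicity of each $G_U$, to show that the closures of these supports faithfully reproduce the ordering and nesting combinatorics of the arcs $U \subset S^1$.

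The crux is the resulting \emph{local rigidity} statement: every faithful $\phi_n$ is topologically conjugate to the tautological action, i.e. there is a homeomorphism $f_n : \R \to I_n$ with $\phi_n(g) = f_n\, g\, f_n^{-1}$ for all $g$. To build $f_n$ I would recover the points of $\R$ from the combinatorics of supports, associating to $x \in \R$ the nested family of supports of $\phi_n(G_U)$ over arcs $U$ shrinking to the image of $x$, and thereby producing an order- and $G$-equivariant bijection from $\R$ onto a dense invariant subset of $I_n$ that extends to the desired conjugacy. The central translation $T$ is essential here: its image translates the entire picture by one period, which both forces the identification to intertwine the two $\Z$-actions and pins down the behavior at the two ends of $I_n$. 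Setting the $f_n$ as above exhibits $\phi$ as a topologically diagonal embedding with intervals $\{I_n\}$. I expect this last step to be the main obstacle: upgrading the purely order-theoretic reconstruction from supports to an honest \emph{continuous} conjugacy, and in particular controlling the behavior as one approaches $\partial I_n$ while ruling out extra interior fixed points — difficulties that arise precisely because $G$ acts on the noncompact line rather than the compact circle, and whose resolution must invoke $T$ and the non-compactly-supported elements of $G$ that the local subgroups $G_U$ do not see.
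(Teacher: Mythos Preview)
Your approach is genuinely different from the paper's, and considerably longer. The paper does not attempt to reconstruct the conjugacy from the combinatorics of local supports. Instead it organizes the argument around $\fix(\phi(T))$ (not $\fix(\phi(G))$) and reduces immediately to the circle: if $\fix(\phi(T)) = \emptyset$, then $\phi(T)$ is conjugate to a translation, so $\phi$ descends to a homomorphism $\Homeo_+(S^1) \cong \Homeo_\Z(\R)/\langle T\rangle \to \Homeo_+(\R/\langle\phi(T)\rangle) \cong \Homeo_+(S^1)$, and one quotes Matsumoto's theorem that any such homomorphism is an inner automorphism. If $\fix(\phi(T)) \neq \emptyset$, centrality of $T$ gives an order-preserving action of $\Homeo_+(S^1)$ on $\fix(\phi(T))$; torsion elements must act trivially on any linearly ordered set, and simplicity of $\Homeo_+(S^1)$ then forces the whole action to be trivial, so $\fix(\phi(T))$ consists of global fixed points and one applies the first case on each complementary interval.

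What this buys: the entire ``local rigidity'' step you flag as the main obstacle --- building a continuous equivariant bijection from the nesting of supports and controlling behavior near $\partial I_n$ --- is absorbed into Matsumoto's theorem as a black box (the paper notes its proof is ``essentially cohomological''). Your route is in the spirit of Filipkiewicz--Whittaker reconstruction and could in principle be carried out, but it amounts to reproving Matsumoto by hand. Two places where your sketch is thin compared to the paper: your argument that each $I_n$ is $\phi(G)$-invariant (``centrality of $T$ together with perfectness'') is less sharp than the paper's torsion-plus-simplicity trick on $\fix(\phi(T))$; and by working with $\fix(\phi(G))$ rather than $\fix(\phi(T))$ you never isolate the key fact that $\phi(T)$ acts freely on each $I_n$, which is exactly what makes the quotient-to-the-circle step available and what your support reconstruction would implicitly need.
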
 

The proof of Militon's lemma is not difficult, although it uses one deeper result of Matsumoto \cite{Matsumoto}.  We give a short version of Militon's proof for the convenience of the reader.   Matsumoto's result (Theorem 5.3 in \cite{Matsumoto}) is that any homomorphism $\Homeo_+(S^1) \to \Homeo_+(S^1)$ is given by conjugation by an element of $\Homeo_+(S^1)$; the reasons for this are essentially cohomological.  

\begin{proof}[Proof of Lemma \ref{militon lemma}]
Let $\phi: \Homeo_\Z(\R) \to \Homeo_+(\R)$ be a homomorphism, and consider the set of points fixed by $\phi(T)$.  
If $\fix(\phi(T)) = \emptyset$, then Fact \ref{basic fact} implies that $T$ is conjugate to a translation.  Thus, $\R / \langle T \rangle = S^1$, and $\Homeo_\Z(\R) / \langle T \rangle \cong \Homeo_+(S^1)$ acts on $\R / \langle T \rangle$ by homeomorphisms.   By Matsumoto's result, this action comes from conjugation by a homeomorphism of $\R / \langle T \rangle$, which will lift to a homeomorphism $f: \R \to \R$ such that $\phi(g) = fgf^{-1}$ for all $g \in \Homeo_\Z(\R)$.  

Now suppose $\fix(\phi(T)) \neq \emptyset$. Using the case above, it suffices to show each point of $\fix(\phi(T))$ is a global fixed point for $\phi(\Homeo_\Z(\R))$.  
Since $T$ is central, $\fix(\phi(T))$ is preserved by $\phi(\Homeo_+(\R))$.  Thus, we get an induced action of $\Homeo_\Z(\R) / \langle T \rangle \cong \Homeo_+(S^1)$  on $\fix(\phi(T))$, and this action preserves the natural (linear) order on $\fix(\phi(T))$ inherited from $\R$.  It follows that finite order elements of $\Homeo_+(S^1)$ act trivially on $\fix(\phi(T))$.  Since $\Homeo_+(S^1)$ is simple (as noted in Remark \ref{simple rk} above), its action on $\fix(\phi(T))$ must be trivial, and this is what we needed to show.  

\end{proof}

%%%%%%
Before proving Proposition \ref{iso prop}, we need one more easy lemma. 
\begin{lemma}\label{centralizer lemma}
Suppose that $g \in \germ$ is a germ that commutes with all germs of homeomorphisms in $\Homeo_\Z(\R)$.  Then $g$ is the germ of an element of $\Homeo_\Z(\R)$
\end{lemma}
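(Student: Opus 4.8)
The plan is to extract the full conclusion from commutation with the single element $T$. Let $f \in \Homeo_+(\R)$ be any representative of the germ $g$. Since $T$ commutes with itself, $T \in \Homeo_\Z(\R)$, so the hypothesis guarantees in particular that $g$ commutes with $\pi(T)$ in $\germ$. Unwinding the definition of $\germ$, this says that $fT$ and $Tf$ agree on a neighborhood of infinity; that is, there exists $N$ with
$$f(x+1) = f(x) + 1 \quad \text{for all } x \geq N.$$
In other words, $f$ is already $\Z$-equivariant on $[N, \infty)$.

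Next I would promote this eventual equivariance to a genuine element of $\Homeo_\Z(\R)$ by extending $f$ from a fundamental domain. Concretely, for each $x \in \R$ let $k(x)$ be the unique integer with $x + k(x) \in [N, N+1)$, and define $\tilde f(x) = f(x + k(x)) - k(x)$. By construction $\tilde f(x+1) = \tilde f(x) + 1$ for all $x$, so $\tilde f$ commutes with $T$; and the eventual equivariance of $f$ shows $\tilde f = f$ on all of $[N, \infty)$, so in particular $\tilde f$ and $f$ have the same germ. The only thing to check is that $\tilde f$ is an orientation-preserving homeomorphism of $\R$: strict monotonicity is inherited from $f$ on each half-open interval $[N+m, N+m+1)$, and continuity at the integer translates $N + m$ follows from the identity $f(N+1) = f(N)+1$, which is exactly the instance of the displayed equation at $x = N$. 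Hence $\tilde f \in \Homeo_\Z(\R)$ and $\pi(\tilde f) = g$, which is the desired conclusion.

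I expect no serious obstacle here: the entire argument is driven by the presence of $T$ inside its own centralizer, and the remaining work is the routine verification that the $\Z$-equivariant extension of an eventually-equivariant homeomorphism is again a homeomorphism. It is worth noting that the hypothesis is considerably stronger than what the proof uses -- commutation with $\pi(T)$ alone suffices -- which is consistent with this being an ``easy lemma'' whose role is simply to identify the centralizer of $\pi(\Homeo_\Z(\R))$ inside $\germ$.
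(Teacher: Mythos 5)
Your proof is correct and follows the same route as the paper: both arguments use only the commutation of $g$ with the germ of $T$ to get $f(x+1)=f(x)+1$ on a neighborhood of $\infty$, and then observe that such an eventually $\Z$-equivariant homeomorphism agrees near $\infty$ with an element of $\Homeo_\Z(\R)$. The only difference is that you write out explicitly the equivariant extension from a fundamental domain, a detail the paper leaves to the reader.
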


In fact, one can probably show under this hypothesis that $g$ is the germ of the translation $T$, but we won't need this stronger fact.  %

\begin{proof}[Proof of Lemma \ref{centralizer lemma}]
Suppose $g$ is a germ that commutes with all germs of elements of $\Homeo_\Z(\R)$.  Then $g$ commutes with the germ of $T$.  Let $f$ be any homeomorphism with germ $g$.  Then $[f, T]$ is the identity on some neighborhood of $\infty$, so $f$ commutes with $T$ on a neighborhood of $\infty$.  It follows that the restriction of $f$ to this neighborhood agrees with an element of $\Homeo_\Z(\R)$ and so $g$ is the germ of an element of $\Homeo_\Z(\R)$. 
\end{proof}

With these tools, we can now easily prove that $\germ$ and $\Homeo_c(\R)$ are not isomorphic.  
\begin{proof}[Proof of Proposition \ref{iso prop}]
Suppose for contradiction that $\Phi: \germ \to \Homeo_c(\R)$ is an isomorphism.  Let $t$ be the germ of the translation $T: x \to x+1$. 
Then $\Phi(t)$ has support contained in some compact interval $I$.   Consider the map 
$$\Homeo_\Z(\R) \overset{\pi}\to \germ \overset{\Phi}\to \Homeo_c(\R).$$ 
Let $G \subset \Homeo_c(\R)$ be the image of $\Homeo_\Z(\R)$ under this map.  
By Militon's Lemma \ref{militon lemma}, $G$ is a collection of homeomorphisms with support contained in $I$.  The centralizer of $G$ in $\Homeo_c(\R)$ contains any homeomorphism $f$ that fixes $I$ pointwise, and in particular contains some homeomorphism $f \notin G$. 

Since $\Phi$ is an isomorphism, it follows that the centralizer of $\pi(\Homeo_\Z(\R))$ in $\germ$ contains an element \emph{not} in $\pi(\Homeo_\Z(\R))$.  But this contradicts Lemma \ref{centralizer lemma}.  

\end{proof}

%---------------------------------------------------------------------------------
\section{Proof of Theorem \ref{main thm}}

We begin by constructing an \emph{affine subgroup} of germs.  This subgroup will be isomorphic to the standard group of orientation-preserving affine transformations, $\Aff$, but is \emph{not} the image of $\Aff$ under the natural map $\Aff \hookrightarrow \Homeo_+(\R) \overset{\pi}\to \germ$.   
In Proposition \ref{germ only prop}, we will in fact show (in a precise sense) that a subgroup constructed in this manner \emph{cannot} be the image of the standard affine subgroup.   This gives us a concrete ``difference"  between $\germ$ and $\Homeo_+(\R)$ that will help to prove Theorem \ref{main thm}.  

\begin{lemma}[A nonstandard affine subgroup of $\germ$] \label{affine lemma}
Let $a_t \in \germ$ be the germ of the translation $x \mapsto x+t$.  Then there exists a family of germs $b_s \in \germ$, for $s \in \R$ satisfying 
$$ a_t b_s a_t^{-1} = b_{e^t s} $$ 
$$ b_s b_r = b_r b_s$$
$$ b_{n s} = (b_s)^n$$
for all $s \in \R$, $t>0$ and $n \in \Z$. 
\end{lemma}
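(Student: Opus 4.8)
The plan is to find an explicit change of coordinates near $\infty$ under which the translation germ $a_t$ becomes a \emph{dilation}, and then to let the $b_s$ be the germs of \emph{translations} in the new coordinate. The key observation is that the exponential conjugates translation to scaling: since $e^{x+t} = e^t e^x$, in the coordinate $y = e^x$ near $y = +\infty$ the map $x \mapsto x+t$ becomes $y \mapsto e^t y$. In that coordinate, dilation by $e^t$ conjugates the translation $y \mapsto y+s$ to the translation $y \mapsto y + e^t s$, which is exactly the affine relation we want once $b_s$ plays the role of the $y$-translation. Because the translations $\{y \mapsto y+s\}$ form a one-parameter group, the resulting germs will automatically commute and satisfy $b_{ns} = b_s^n$.

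Concretely, I would define, for each $s \in \R$, the map
$$\beta_s(x) = \ln\!\left(e^x + s\right),$$
the conjugate of $y \mapsto y+s$ by $x \mapsto e^x$. For $s \geq 0$ this is defined on all of $\R$, and for $s < 0$ it is defined and strictly increasing on the neighborhood $(\ln(-s),\infty)$ of $\infty$; in either case it is an orientation-preserving homeomorphism from a neighborhood of $\infty$ onto a neighborhood of $\infty$, so after extending it arbitrarily to an element of $\Homeo_+(\R)$ (which does not alter its germ) it determines an element $b_s \in \germ$.

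The three relations are then checked by direct substitution on a neighborhood of $\infty$, where every expression is defined. For the semigroup relation, since $e^{\beta_r(x)} = e^x + r$ we get
$$\beta_s\!\big(\beta_r(x)\big) = \ln\!\big(e^{\beta_r(x)} + s\big) = \ln\!\big(e^x + r + s\big) = \beta_{r+s}(x),$$
so in fact $b_s b_r = b_{s+r}$; commutativity of addition gives $b_s b_r = b_r b_s$, and $b_{ns} = (b_s)^n$ follows by induction. For the affine relation, taking $T_t(x) = x+t$ as a representative of $a_t$,
$$T_t\, \beta_s\, T_t^{-1}(x) = \beta_s(x-t) + t = \ln\!\big(e^{x-t} + s\big) + t = \ln\!\big(e^x + e^t s\big) = \beta_{e^t s}(x),$$
so $a_t b_s a_t^{-1} = b_{e^t s}$ as germs (the computation in fact holds for every $t$, in particular for $t>0$).

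The algebra is routine; the one point requiring care --- and the place where a naive attempt fails --- is that germs must satisfy these identities \emph{exactly} on a neighborhood of $\infty$, not merely asymptotically. The first-order expansion $\beta_s(x) = x + s e^{-x} + \cdots$ tempts one to set $\beta_s(x) = x + s e^{-x}$ outright, but those homeomorphisms do not commute on any neighborhood of $\infty$, so they fail to give commuting germs. Passing to the exponential coordinate is precisely what promotes the approximate relations to exact ones, and this is the crux of the argument.
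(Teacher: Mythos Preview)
Your proof is correct and is essentially identical to the paper's own: the paper also sets $B_s(x)=\log(e^x+s)$ on a neighborhood of $\infty$, extends to a homeomorphism of $\R$, and verifies the three relations by the same direct substitutions. Your version adds the motivating change-of-coordinates picture and the stronger identity $b_s b_r=b_{s+r}$, but the construction and computations match the paper exactly.
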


\begin{remark} \label{aff remark}
Let $\Agerm$ be the group generated by the $a_t$ and $b_s$ of Lemma \ref{affine lemma}.  Define a homomorphism $\psi: \Agerm \to \Aff$ given by
$$\psi(a_t)(x) = e^t x$$
$$\psi(b_s)(x) = x + s$$
The relations in the statement of Lemma \ref{affine lemma} imply that $\psi$ is a homomorphism.  On the specific group $\Agerm$ constructed in the proof below, $\psi$ will be an \emph{isomorphism}.
\end{remark}

\begin{proof}[Proof of Lemma \ref{affine lemma}]
Let $s \in \R$.  Define $B_s$ on $[\log(|s| + 1), \infty)$ by 
$$B_s(x) = \log(e^x + s)$$
This is an orientation-preserving homeomorphism from $[\log(|s| + 1), \infty)$ to $[\log(|s| + s + 1), \infty)$, so can be extended to an orientation-preserving homeomorphism of $\R$.  Abusing notation, we let $B_s$ denote some such extension, and let $b_s$ be the germ at infinity of $B_s$. 
 
Let $A_t$ denote the translation $x \mapsto x+t$.   Then, for all $x$ in a neighborhood of $\infty$, we have
$$B_r B_s (x) = \log(e^{\log(e^x + s)} + r) = \log(e^{x} + r + s) = B_s B_r (x),$$ 
$$A_t B_s A_t^{-1}(x) = \log(e^{x-t} + s) + t = \log(e^{x-t} + s) + \log(e^t) = \log(e^x + e^t s) = B_{e^ts}(x),$$
and
$$B_{ns}(x) = \log(e^x + ns) = (B_s)^n(x).$$
\end{proof} 

\bigskip
\noindent Our next proposition shows that the construction in Lemma \ref{affine lemma} only works on the level of germs.
\begin{proposition} \label{germ only prop}
Let $A_{t}$ denote translation by $t$.  
There does not exist a collection of globally defined, nontrivial homeomorphisms $B_s \in \Homeo_+(\R)$ such that the conditions 
$$ A_t B_s A_t^{-1} = B_{e^t s} $$ 
$$ B_s B_r = B_r B_s$$
$$ B_{n s} = (B_s)^n$$
hold for all $s \in \R$, $t>0$ and $n \in \Z$.
\end{proposition}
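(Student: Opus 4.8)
The plan is to assume, for contradiction, that such homeomorphisms $B_s$ exist, and to extract from the relations a canonical invariant measure that cannot coexist with $A_t$ being an \emph{honest} translation. First note that rescaling the parameter $s$ preserves all three relations, so after rescaling I may assume $B_1 \neq \id$. Writing $A_t$ for translation by $t$, the first relation (applied with $s=1$) shows that every $B_r$ is determined by $B_1$ via $B_r = A_{\log r}\, B_1\, A_{\log r}^{-1}$ for $r>0$, and that $B_1^{\,n} = B_n = A_{\log n}\, B_1\, A_{\log n}^{-1}$ for each integer $n\geq 1$.

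The next step is to show $B_1$ is fixed point free. Since $B_1$ is orientation preserving, $\fix(B_1^{\,n}) = \fix(B_1)$, so the relation $B_1^{\,n} = A_{\log n}\,B_1\,A_{\log n}^{-1}$ gives $\fix(B_1) = \fix(B_1) + \log n$ for all $n\geq 1$. Thus $\fix(B_1)$ is a closed set invariant under translation by every element of the subgroup of $(\R,+)$ generated by $\{\log n : n\geq 1\}$, which is dense; hence $\fix(B_1)$ is $\emptyset$ or $\R$, and as $B_1\neq\id$ it is empty. The same argument applies to every $B_r$ with $r\neq 0$, and after possibly replacing $s$ by $-s$ I may assume $B_1(x)>x$ for all $x$.

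Now I would build an essentially unique invariant measure. Since $B_1$ is fixed point free, $\R/\langle B_1\rangle$ is a circle, and each $B_r$, commuting with $B_1$, descends to a homeomorphism $\bar B_r$ of it. Using $A_t B_r A_t^{-1} = B_{e^t r}$ and the conjugacy invariance of the translation number, one computes that $B_r$ has translation number $r$ relative to $B_1$, i.e. $\bar B_r$ has rotation number $r \bmod \Z$; choosing $r$ irrational, $\bar B_r$ is uniquely ergodic. Lifting its unique invariant probability measure produces a Radon measure $\mu$ on $\R$ invariant under $B=\langle B_s\rangle$ and, by unique ergodicity, the only such measure up to scale. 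Because $\mu$ is $B_1$-invariant with $B_1$ fixed point free, its fundamental domains tile each half-line, so $\mu\big((-\infty,0)\big)=\mu\big((0,\infty)\big)=\infty$.

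Finally I would play $A_t$ against $\mu$. As $A_t$ normalizes $B$, the measure $(A_t)_*\mu$ is again $B$-invariant, so by uniqueness $(A_t)_*\mu = \lambda(t)\,\mu$ with $\lambda(t)=e^{\kappa t}$. This is exactly where it matters that $A_t$ is a genuine translation: the identity $\mu(E-t)=e^{\kappa t}\mu(E)$ forces $\mu$ to have exponential density $c\,e^{-\kappa x}\,dx$, and two-sided infiniteness then forces $\kappa=0$, so $\mu$ is Lebesgue measure. But an orientation-preserving homeomorphism preserving Lebesgue measure is a translation, so every $B_s$ is a translation; then $A_t$ commutes with $B_s$, giving $B_{e^t s}=B_s$, and combined with $B_{2s}=(B_s)^2$ this yields $(B_s)^2=B_s$, hence $B_s=\id$ -- contradicting $B_1\neq\id$. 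I expect the main obstacle to be the measure-existence step: producing a $B$-invariant Radon measure that is canonical enough (unique up to scale) that $A_t$ is forced to merely rescale it. This is also precisely the germ/global distinction behind the proposition: the germ construction of Lemma~\ref{affine lemma} only ``sees'' one end of $\R$, where an exponential measure $c\,e^{-\kappa x}\,dx$ is perfectly admissible, whereas a \emph{global} homeomorphism compels both ends to carry infinite mass and thereby kills the exponential factor.
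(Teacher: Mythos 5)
Your first half is correct and is a genuinely different (and arguably cleaner) argument than the paper's for ruling out fixed points: the paper handles the case $\fix(B_s) \neq \emptyset$ by using continuity of the family $A_t B_s A_t^{-1}$ in $t$ and examining endpoints of components of $\R \setminus \fix(B_s)$, whereas you observe that $\fix(B_s) = \fix(B_s^{\,n}) = \fix(B_{ns}) = \fix(B_s) + \log n$, so $\fix(B_s)$ is a closed set invariant under the dense group $\log \Q_{>0}$ of translations, hence empty. That is a complete reduction to the fixed-point-free case. At that point, however, you embark on a long detour where a two-line finish is available (and is essentially the paper's first case): conjugating $B_1$ to the unit translation $T$, the relation $A_{\log 2}\, B_1\, A_{\log 2}^{-1} = B_1^{\,2}$ becomes $A T A^{-1} = T^2$ for the conjugated copy $A$ of $A_{\log 2}$, i.e.\ $A^{-1}(x + 2k) = A^{-1}(x) + k$, which forces $A^{-1}(x) - x$ to change sign and hence $A$ to have a fixed point --- impossible for a conjugate of a translation.

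The genuine gap in your route is the assertion that $\bar B_r$ has rotation number $r$ on $\R / \langle B_1 \rangle$ ``by conjugacy invariance of the translation number.'' The hypotheses do \emph{not} include $B_s B_r = B_{s+r}$ (that relation holds for the germs of Lemma \ref{affine lemma} but is not part of the Proposition); the only exact computation available is $\tau(B_{p/q}) = p/q$ for rational $p/q$, coming from $B_{p/q}^{\,q} = B_1^{\,p}$. For irrational $r$, the conjugation relation only yields that $u \mapsto \tau(B_{e^u})$ is a homomorphism from $(\R,+)$ to the multiplicative group of nonzero reals agreeing with $\exp$ on the dense subgroup $\log \Q_{>0}$, and with no continuity or monotonicity of $s \mapsto B_s$ one cannot conclude $\tau(B_r) = r$. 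What you actually need --- that \emph{some} $\bar B_r$ has irrational rotation number, so that unique ergodicity applies --- is true but requires an argument you do not give: for instance, the image of that homomorphism is a divisible subgroup of $\R^\times$, and the only divisible subgroup of $\Q_{>0}^{\times} \cong \bigoplus_p \Z$ is trivial, so the image cannot lie in $\Q$. Even this repair leans on the nontrivial facts that $\tau_{f'}(g) = \tau_f(g)/\tau_f(f')$ for commuting homeomorphisms and that a fixed-point-free homeomorphism commuting with $B_1$ has nonzero translation number. The remainder of your measure argument (the rescaling $(A_t)_*\mu = e^{\kappa t}\mu$, the exponential density, two-sided infiniteness forcing $\kappa = 0$, and Lebesgue-preservation forcing every $B_s$ to be a translation) is sound once an irrational rotation number is secured, but the rotation-number step is where the proof as written does not close.
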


\begin{proof}
Suppose we had such a collection of homeomorphisms.   As a first case, assume that for some $s \in \R$, the homeomorphism $B_s$ acts freely (i.e. without fixed points) on $\R$.  Then $B_s$ is conjugate to the translation $T: x \mapsto x+1$.   It is easy to show, using a Banach contraction principle argument ($A^{-1}$ takes intervals of length $n$ to intervals of length 1), that  any homeomorphism $A$ satisfying $ATA^{-1} = T^n$ must act with a fixed point on $\R$.  

In particular, that
$A_{\log(n)} B_s A_{\log(n)}^{-1}(x) = (B_{s})^n(x)$ implies that (a conjugate of) $A_{\log(n)}$ acts with a fixed point, contradicting that $A_{\log(n)}$ is a translation.  

Thus, we need only deal with the case where $\fix(B_s) \neq \emptyset$. Let $C$ be a connected component of $\R \setminus \fix(B_s)$.  For any $t$, we know that $A_t B_s A_t^{-1}$ commutes with $B_s$, so permutes the connected components of $\R \setminus \fix(B_s)$.  The family of functions $F_t:= A_t B_s A_t^{-1}$ is continuous in $t$, and $F_0(C) = C$, so we must also have $F_t(C) = C$ for all $t$.    
Now consider a connected component either of the form $(x, y)$ or of the form $(-\infty, y)$.  For sufficiently small  $t>0$, we have $y-t \in C$, so $B_s(y-t) \neq y-t$.  Thus,
$$A_t B_s A_t^{-1}(y) = B_s(y-t) + t \neq y$$ 
contradicting that $A_t B_s A_t^{-1}(C) = C$.

%%%
%Then, as in Remark \ref{aff remark} above, we could define a homomorphism 
%$\Psi$ from the group generated by the $A_t$ and $B_s$ to $\Aff$ via
%$$\Psi(A_t)(x) = e^t x, \text{ and } \Psi(B_s)(x) = x+s.$$
%Moreover, it is easy to check that $\Psi$ will satisfy $$e^{B_s(x)} = \Psi(B_s)(e^x) = e^x + s.$$
%Setting $s = 1$, this implies that $B_1(x) > 0$ for all $x \in \R$, contradicting the fact that $B_1$ was a globally defined homeomorphism.  
\end{proof}

Now we proceed with the proof of Theorem \ref{main thm}. 
Suppose for contradiction that there is a nontrivial homomorphism $\Phi: \germ \to \Homeo_+(\R)$.  Since $\germ$ is simple (Proposition \ref{simple prop}), $\Phi$ is injective.   Let $a_t$ be the germ of the translation $x \mapsto x+t$, which is an element of $\Homeo_\Z(\R)$.  Let $\Agerm = \langle a_t, b_s \rangle \subset \germ$ be the affine group constructed in Lemma \ref{affine lemma}, and 
let $I$ be a connected component of $\R \setminus \fix \Phi(a_1)$.  

Applying Militon's Lemma \ref{militon lemma} to the composition 
$$\Homeo_\Z(\R) \overset{\pi} \to \germ \overset{\Phi} \to \Homeo_+(\R)$$
we conclude that there is a homeomorphism $f: \R \to I$ such that, for all $g \in \Homeo_\Z(\R)$, the action of $\Phi(g)$ on $I$ is given by $\Phi(g)(x) = f g f^{-1}(x)$.   In particular, $\Phi(a_t)(I) = I$ holds for all $t$, and $f$ conjugates $\Phi(a_t)|_I$ to translation by $t$ on $\R$.  

Our next claim is that the elements $\Phi(b_s)$ also preserve $I$.  

\begin{lemma} \label{fixed lemma}
$\Phi(b_s)(I) = (I)$ for all $b_s \in \Agerm$.  
\end{lemma}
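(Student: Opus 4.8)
The plan is to prove the lemma by analyzing the fixed-point set of $\Phi(b_s)$ and showing that it is compatible with $I$ only in the ways that force $\Phi(b_s)(I)=I$. Throughout I write $g_s := \Phi(b_s)$, and I push the relations of Lemma \ref{affine lemma} through $\Phi$ to get $\Phi(a_t)\,g_s\,\Phi(a_t)^{-1}=g_{e^t s}$ and $g_{ns}=g_s^{\,n}$; taking $e^t=n$ for a positive integer $n$ yields the Baumslag--Solitar-type relation
$$\Phi(a_{\log n})\,g_s\,\Phi(a_{\log n})^{-1}=g_s^{\,n}.$$
First I would record what Militon's Lemma \ref{militon lemma} supplies: $\Phi(a_t)$ is topologically diagonal, so $\fix\Phi(a_1)=\R\setminus\bigcup_n I_n$ and $I=I_{n_0}$ for some $n_0$; on $I$ the map $f$ conjugates $\Phi(a_t)|_I$ to translation by $t$; and, crucially, for each fixed $x$ the map $t\mapsto\Phi(a_t)(x)$ is continuous (it is constant on $\fix\Phi(a_1)$ and of the form $f_n(f_n^{-1}(x)+t)$ on each $I_n$).

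The key step is to show that $\fix(g_s)$ is invariant under the \emph{entire} flow $\{\Phi(a_t)\}_{t\in\R}$. Since an orientation-preserving homeomorphism of $\R$ has the same fixed set as each of its nonzero powers, the displayed relation gives $\fix(g_s)=\Phi(a_{\log n})(\fix(g_s))$ for every positive integer $n$. The set of $t$ for which $\Phi(a_t)$ preserves $\fix(g_s)$ is a subgroup of $\R$ containing every $\log n$, hence every $\log(n/m)$, hence a dense subgroup of $\R$. As $\fix(g_s)$ is closed and $t\mapsto\Phi(a_t)(x)$ is continuous, a limiting argument promotes this to invariance under all $\Phi(a_t)$. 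Restricting to $I$, where the flow is conjugate to the full group of translations, a closed flow-invariant subset must be $\emptyset$ or $I$; thus $\fix(g_s)\cap I\in\{\emptyset,I\}$ for every $s$.

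Next I would run a dichotomy on the subgroup $S:=\{s\in\R:g_s(I)=I\}$. Using $g_{s+r}=g_sg_r$ together with $g_{e^t s}(I)=\Phi(a_t)(g_s(I))$ (which follows from $\Phi(a_t)(I)=I$), $S$ is a subgroup of $(\R,+)$ closed under multiplication by every $e^t$, hence by every positive real; therefore $S=\{0\}$ or $S=\R$. If some $s\ne0$ satisfies $\fix(g_s)\cap I=I$, then $g_s$ fixes $I$ pointwise, so $s\in S$ and $S=\R$, proving the lemma. This reduces everything to the \emph{moving case}: $\fix(g_s)\cap I=\emptyset$ and $g_s(I)\ne I$ for all $s\ne0$.

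The hard part is excluding the moving case, and this is where the full strength of Militon's lemma is needed. Fix $s\ne0$ and, replacing $s$ by $-s$ if necessary, assume $g_s(x)>x$ on $I$; let $V\supseteq I$ be the component of $\{x:g_s(x)>x\}$ containing $I$. Its endpoints lie in $\fix(g_s)$, so $g_s(V)=V$ and $g_s|_V$ is fixed-point free, hence conjugate to a translation. The relation $\Phi(a_{\log 2})g_s\Phi(a_{\log 2})^{-1}=g_s^2$ shows that $\Phi(a_{\log 2})$ preserves $\{x:g_s(x)>x\}$ and therefore preserves $V$; in the translation coordinate on $V$ it satisfies $\Phi(a_{\log 2})(y+1)=\Phi(a_{\log 2})(y)+2$, so the intermediate-value (Banach contraction) argument of Proposition \ref{germ only prop} forces a fixed point of $\Phi(a_{\log 2})$ in $V$. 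Hence $V$ meets $F:=\fix\Phi(a_1)$. To finish one must contradict this: since $\Phi$ is defined on all of $\germ$, \emph{every} element of $\Phi(\pi(\Homeo_\Z(\R)))$ is supported in $\bigcup_n I_n$ and fixes $F$ pointwise, and one must show that the flow-line $I$ and a point of $F$ cannot lie in a common interval $V$ on which $g_s$ acts like a translation --- equivalently, that $g_s$ cannot shift $I$ across its endpoint in $F$ into the neighbouring component. I expect this last point --- which leverages how $b_s$ interacts with $\Homeo_\Z(\R)$ inside $\germ$, not merely the affine relations --- to be the main obstacle, since the affine relations and the flow structure alone are satisfied by the standard action of $\Aff$ on $\R$, for which the conclusion of the lemma is false.
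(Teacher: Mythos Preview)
Your analysis up through the ``moving case'' is careful and correct, and you have correctly diagnosed the obstruction: the affine relations together with the topologically diagonal structure of $\Phi\circ\pi|_{\Homeo_\Z(\R)}$ are \emph{not} enough to finish, because (as you observe) the standard action of $\Aff$ on $\R$ models exactly this situation and there $b_s$ does move $I$ off itself. So the proposal has a genuine gap: you reduce to showing that $g_s$ cannot slide $I$ across an endpoint in $F=\fix(\Phi(a_1))$, but you do not show it, and no argument based solely on $\langle a_t,b_s\rangle$ can.

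The paper resolves this by abandoning the affine relations entirely for this lemma and instead exploiting the fact that $b_s$ lives in $\germ$, where it can be \emph{factored} through $\Homeo_\Z(\R)$-like pieces. Concretely, one writes $b_s=f_1f_2$ where each $f_i$ is the germ of a homeomorphism that fixes a $T$-invariant set $S_i\subset\R$ pointwise (with $S_1$ a neighborhood of $\Z$ and $S_2$ a neighborhood of $\Z+\tfrac12$) and satisfies $f_i(x)>x$ off $S_i$. Each $f_i$ then commutes with the subgroup $G_i\subset\Homeo_\Z(\R)$ supported on $S_i$, and moreover $f_i$ is conjugate, by homeomorphisms $h_i^t$ fixing $S_i$, to germs $h_i^t f_i (h_i^t)^{-1}$ lying in $\Homeo_\Z(\R)$ and tending to the identity as $t\to 1$. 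Militon's lemma makes $t\mapsto\Phi\bigl(h_i^t f_i (h_i^t)^{-1}\bigr)$ a continuous path to $\id$; since each element of this path commutes with $\Phi(G_i)$, it preserves each component of $\R\setminus\fix(\Phi(G_i))$. Conjugating back by $\Phi(h_i^t)$ (which also commutes with $\Phi(G_i)$), one gets that $\Phi(f_i)$ itself preserves each such component. By the topologically diagonal form, the components of $\R\setminus\fix(\Phi(G_i))$ accumulate at the endpoints of $I$, forcing $\Phi(f_i)(I)=I$ and hence $\Phi(b_s)(I)=I$. This is precisely the ``interaction of $b_s$ with $\Homeo_\Z(\R)$ inside $\germ$'' that you anticipated needing; the factorization is the missing idea.
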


Let us defer the proof of Lemma \ref{fixed lemma} for a moment and see how this lemma can be used to (very quickly!) finish the proof of Theorem \ref{main thm}.

\boldhead{Proof of Theorem \ref{main thm} given Lemma \ref{fixed lemma}}
Assuming Lemma \ref{fixed lemma}, we have homeomorphisms $\Phi(a_t)|_I$ and $\Phi(b_s)|_I$ of $I$.  Conjugating by the homeomorphism $f: \R \to I$ given by Militon's lemma, $A_t:= f \Phi(a_t)|_I f^{-1}$ is translation by $t$ on $\R$, and $B_s := f \Phi(b_s)|_I f^{-1}$ is a globally defined homeomorphism of $\R$.  Moreover, $A_t$ and $B_s$ satisfy the hypotheses of Proposition \ref{germ only prop}.  But Proposition \ref{germ only prop} states that no such homeomorphisms exist.   This gives our desired contradiction.  
 \qed
 
\bigskip

It remains only to prove Lemma \ref{fixed lemma}.  

\begin{proof}[Proof of Lemma \ref{fixed lemma}]
We prove this by ``factoring" $b_s$ into a product of two germs with dynamics that we can control.  This requires a small amount of set-up.  

Define sets $S_i \subset \R$ by $S_1 : = \bigcup \limits_{n \in \Z} \left( n-\frac{1}{10}, n+\frac{1}{10} \right)$ and $S_2 := \bigcup \limits_{n \in \Z} \left( n+ \frac{4}{10}, n+\frac{6}{10} \right)$.  
Let $G_i \subset \Homeo_\Z(\R)$ be the subgroup of homeomorphisms supported on $S_i$.
\bigskip

Fix $s>0$ (the argument for $s<0$ is entirely analogous), and let $B_s$ be a homeomorphism with germ $b_s$.  Then $B_s(x) = \log(e^x + s)$ for all $x$ in some neighborhood of $\infty$.  In particular, there exists some $x_0$ such that $0 < B_s(x) - x < \frac{1}{10}$ for all $x \in [x_0, \infty)$.  
One can now easily construct a homeomorphism $f_1$ satisfying the following four properties:
\begin{enumerate}[i)]
\item $f_1(x) = x$ for $x \in S_1$
\item $f_1(x) > x$ for $x \in [x_0, \infty) \setminus S_1$
\item $f_1(x) = B_s(x)$ for $x \in S_2$
\item $f_1(x) < B_s(x)$ for $x \in [x_0, \infty) \setminus S_2$. 
\end{enumerate}
Let $f_2 = f_1^{-1} B_s$.  Thus $B_s = f_1 f_2$.  Our next goal is to show that $\Phi(f_i)(I) = I$.  

Note first that  $f_i$ is the identity on $S_i$, so $f_i$ commutes with $G_i$.  
Also, note that $f_i(x) > x$ for all $x \in [x_0, \infty) \setminus S_i$. Thus, by a straightforward generalization of Fact \ref{basic fact}, there exist continuous families of homeomorphisms $\{ h^t_1\} \subset \Homeo_+(\R)$ and $\{ h^t_2 \} \subset \Homeo_+(\R)$ for $t \in [0,1)$ such that  
\begin{enumerate}[i)]
\item $h^t_i(x) = x$ for all $x \in S_i$,
\item $h^t_i f_i (h^t_i)^{-1} \in \Homeo_\Z(\R)$, and
\item $\lim \limits_{t \to 1} h^t_i f_i (h^t_i )^{-1} = \id.$
\end{enumerate} 

By construction, $h^t_1 f_1 (h^t_1)^{-1}$ fixes $S_1$ pointwise (for every $t$), so commutes with $G_1$.  It follows that $\Phi(h^t_1 f_1 (h^t_1)^{-1})$ commutes with $\Phi(G_1)$ and so permutes the connected components of $\fix(\Phi(G_1))$.  
By Militon's Lemma, $\Phi(h^t_1 f_1 (h^t_1)^{-1})$ is a continuous family in $\Homeo_+(\R)$, with 
$$\lim \limits_{t \to 1} \Phi(h^t_1 f_1 (h^t_1)^{-1})= \id.$$  By continuity of this family (just as in the proof of Proposition \ref{germ only prop}), we conclude that $\Phi(h^t_1 f_1 (h^t_1)^{-1})$ preserves each connected component of $\R \setminus \fix(\Phi(G_1))$. 
Since $\Phi(h_1)$ also commutes with $\Phi(G_1)$, it also permutes the connected components of $\R \setminus \fix(\Phi(G_1))$, and so $\Phi(f_1)$ must preserve each connected component of $\R \setminus \fix(\Phi(G_1))$.   Militon's lemma tells us that these connected components accumulate at the endpoints of $I$, so $f_1(I) = I$.  

An identical argument can be used to show that $\Phi(f_2)(I) = I$.  Thus, $\Phi(B_s) = \Phi(f_1)\Phi(f_2)$ preserves $I$, and the lemma is proved.    This completes the proof of Theorem \ref{main thm}.

\end{proof}

%---------------------------------------------------------------------------------
\section{Applications}  \label{applications sec}

\subsection{Proof of Proposition \ref{extension prop}}
We prove Proposition \ref{extension prop} by constructing a finitely generated subgroup $\Gamma \subset \germ$ that does not extend to $\Homeo_+(\R)$.  
The strategy is similar to that of the proof of Lemma \ref{fixed lemma}, although we can no longer use Militon's lemma and continuity of the action of $\Homeo_\Z(\R)$ subgroups.  Instead, we make use of properties of extensions. 

\boldhead{Construction of $\Gamma$}
Let $S_i$ be the sets defined in Lemma \ref{fixed lemma}. 
Our group is generated by the following elements of $\germ$:
\begin{enumerate}[]
\item $t$, the germ of  $T: x \mapsto x+1$
\item $b$, the germ of $x \mapsto \log(e^x + 1)$
\item $a$, the germ of $x \mapsto x + \log(2)$
\item $f_1$ and $f_2$, where $f_i$ is the germ of a homeomorphism that fixes the set $S_i$ pointwise, satisfying $f_1f_2 = b$. (The existence of such $f_i$ follows from the proof of Lemma \ref{fixed lemma}.)
\item $g_1$ and $g_2$, germs of homeomorphisms commuting with $T$, with support contained in $S_i$.  
\end{enumerate}
Note that we have the additional relation $aba^{-1} = b^2$, that $a$ commutes with $T$, and that $g_i$ and $f_i$ commute.

\begin{claim} Let $\Gamma$ be the group generated by $t, b, a, f_1, f_2, g_1$ and $g_2$.  Then $\Gamma$ does not extend to $\Homeo_+(\R)$.
\end{claim}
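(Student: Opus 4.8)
The plan is to assume for contradiction that an extension $\Phi:\Gamma\to\Homeo_+(\R)$ exists and to reproduce the skeleton of Lemma \ref{fixed lemma}: produce an interval $I$ on which $\Phi(a)$ acts freely, show $\Phi(b)$ preserves $I$, and then derive a contradiction from the relation $aba^{-1}=b^2$. Write $A=\Phi(a)$, $B=\Phi(b)$, $\hat T=\Phi(t)$, $F_i=\Phi(f_i)$ and $\gamma_i=\Phi(g_i)$. Because $\Phi$ is an extension, each of these agrees near $+\infty$ with the prescribed representative: $\hat T(x)=x+1$, $A(x)=x+\log 2$, $B(x)=\log(e^x+1)$ for large $x$, $F_i$ fixes $S_i$ pointwise near $+\infty$, and $\gamma_i$ is supported on $S_i$ and commutes with $x\mapsto x+1$ near $+\infty$. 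And since $\Phi$ is a homomorphism the relations of $\Gamma$ are preserved, giving $ABA^{-1}=B^2$, $[A,\hat T]=[\gamma_i,\hat T]=[F_i,\gamma_i]=\id$ and $F_1F_2=B$ \emph{globally}. These exact global relations, replacing Militon's Lemma and the continuity argument, are the ``properties of extensions'' to be exploited.

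First I would build the interval. Let $p_0=\sup\fix(\hat T)$ (with $p_0=-\infty$ if $\hat T$ is fixed point free) and $I=(p_0,\infty)$, the top component of $\R\setminus\fix(\hat T)$; note $\hat T$ is free and moves points up on $I$. Since $A,\gamma_1,\gamma_2$ commute with $\hat T$, they preserve $\fix(\hat T)$ and fix $+\infty$, hence map $I$ to itself; in particular $p_0\in\fix(\gamma_i)$. Because $\fix(A)$ is $\hat T$-invariant and the $\hat T$-orbit of any point of $I$ runs out to both ends of $I$, a fixed point of $A$ inside $I$ would force $\fix(A)$ to be unbounded above, contradicting that $A$ is a translation near $+\infty$; thus $A$ is free on $I$.

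The heart of the argument is to show $B(I)=I$, for which it suffices (as $B=F_1F_2$) to prove $F_i(I)=I$, i.e. $F_i(p_0)=p_0$. Here I would use $\hat T$ to transport the germ data inward: since $\fix(\gamma_i)$ is $\hat T$-invariant and coincides near $+\infty$ with $\R\setminus S_i$ (choosing $g_i$ to move every interior point of each component of $S_i$), the set $\fix(\gamma_i)\cap I$ is exactly the $\hat T$-saturation of this, so in a coordinate linearizing $\hat T|_I$ to a translation it is periodic. Consequently the components of $I\setminus\fix(\gamma_i)$ are locally finite in the interior of $I$ yet accumulate at the endpoint $p_0$. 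Now $F_i$ commutes with $\gamma_i$, so it preserves $\fix(\gamma_i)$ and permutes these components, and by its germ it fixes each of them pointwise near $+\infty$. If $F_i(p_0)>p_0$, then applying $F_i$ to components $J_k\downarrow p_0$ produces components of $I\setminus\fix(\gamma_i)$ accumulating at the interior point $F_i(p_0)$, impossible by local finiteness; and $F_i(p_0)<p_0$ is ruled out since the same reasoning applied to $F_i^{-1}$ gives $F_i^{-1}(p_0)<p_0$, whence order-preservation yields $p_0<F_i(p_0)$, a contradiction. Hence $F_i(p_0)=p_0$ and $F_i(I)=I$.

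Finally I would close exactly as in Proposition \ref{germ only prop}. Restricting to $I\cong\R$ gives homeomorphisms $A|_I$ (free, moving up) and $B|_I$ with $A|_I\,B|_I\,A|_I^{-1}=(B|_I)^2$. The set $\fix(B|_I)$ is $A|_I$-invariant and, since $B$ is free near $+\infty$, bounded above; $A|_I$-invariance and freeness of $A|_I$ then force it to be empty, so $B|_I$ is free and hence conjugate to a translation. But a homeomorphism conjugating a translation $T$ to $T^2$ must have a fixed point by the Banach contraction argument of Proposition \ref{germ only prop}, contradicting that $A|_I$ is free. This contradiction shows $\Gamma$ has no extension. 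The main obstacle is the middle step: without Militon's Lemma there is no a priori control of $F_i$ away from $+\infty$, and the crux is that $\hat T$-invariance of $\fix(\gamma_i)$ rigidly propagates the germ structure inward to the single accumulation point $p_0$, pinning $F_i(p_0)=p_0$.
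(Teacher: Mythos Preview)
Your proof is correct and takes essentially the same approach as the paper's: both reduce to the top interval $I=(p_0,\infty)$ with $p_0=\sup\fix(\Phi(t))$, use $\Phi(t)$-invariance of $\fix(\Phi(g_i))$ to see that its components in $I$ accumulate only at $p_0$, deduce from $[\Phi(f_i),\Phi(g_i)]=\id$ that $\Phi(f_i)(p_0)=p_0$ (hence $\Phi(b)(I)=I$), and finish with the Banach-contraction contradiction of Proposition~\ref{germ only prop}. The only differences are cosmetic---you unify the two cases by allowing $p_0=-\infty$, and you reverse the order of the final contradiction (first ruling out $\fix(B|_I)\neq\emptyset$, then forcing a fixed point of $A|_I$)---and one minor slip: ``the same reasoning applied to $F_i^{-1}$'' yields $F_i^{-1}(p_0)\le p_0$ rather than strict inequality, but this is exactly what you need.
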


\begin{proof}
Suppose for contradiction that $\Phi: \Gamma \to \Homeo_+(\R)$ is an extension.  
Assume as a first case that $\fix(\Phi(t)) = \emptyset$, so $\Phi(t)$ is conjugate to a translation.  In this case, we won't even need to consider $\Phi(f_i)$ and $\Phi(g_i)$.  
Since $\Phi(a)$ and $\Phi(t)$ commute, $\fix(\Phi(a))$ is a $\Phi(t)$-invariant set.  However, $\Phi$ is an extension, so $\Phi(a)$ has no fixed points in a neighborhood of $\infty$. Hence, $\fix(\Phi(a)) = \emptyset$.  

The relation $aba^{-1} = b^2$ (and a Banach contraction principle argument as in the proof of Proposition \ref{germ only prop}) now implies that $\fix(\Phi(b)) \neq \emptyset$.  Let $x \in \fix(\Phi(b))$.  Then 
$$\Phi(b^2 a)(x) = \Phi(a b)(x) = \Phi(a)(x)$$ so $a(x) \in \fix(\Phi(b^2)) = \fix(\Phi(b))$.  It follows that 
$\fix(\Phi(b))$ is a $\Phi(a)$-invariant set.  In particular, it contains the points $\Phi(a^n)(x)$, an unbounded sequence.  This contradicts that $\Phi$ is an extension and $b$ is a fixed point free germ.  

If instead $\fix(\Phi(t)) \neq \emptyset$, that $\Phi$ is an extension implies that $\fix(\Phi(t))$ has a rightmost point, say $x_0$.  We'll show that $\Phi(a)$ and $\Phi(b)$ both fix $x_0$.  Having shown this, the argument above applies verbatim (considering the restriction of $\Phi(a)$, $\Phi(b)$ and $\Phi(t)$ to $(x_0, \infty) \cong \R$), and gives a contradiction.  

That $\Phi(a)(x_0) = x_0$ is easy:  since $a$ and $t$ commute, $\fix(\Phi(t))$ is a $\Phi(a)$-invariant set, and in particular, its rightmost point $x_0$ must be fixed by $\Phi(a)$.   
To see that $\Phi(b)(x_0) = x_0$, we study the action of $\Phi(g_i)$.  Because $\Phi$ is an extension, there is a neighborhood of $\infty$ on which $\fix(\Phi(g_i))$ agrees with $S_i$.  Since $\Phi(g_i)$ and $\Phi(t)$ commute, $\fix(\Phi(g_i))$ is $\Phi(t)$-invariant.  Since $\Phi(t)$ is conjugate to a translation on $(x_0, \infty)$, it follows that $\fix(\Phi(g_i)) \cap (x_0, \infty)$ consists of a union of pairwise disjoint closed intervals accumulating only at $x_0$.  In other words, $x_0$ is the rightmost accumulation point of the connected components of $\fix(\Phi(g_i))$.  Since $\Phi(f_i)$ and $\Phi(g_i)$ commute, $\Phi(f_i)$ acts on $\fix(\Phi(g_i))$, and so fixes this rightmost accumulation point.  

We have just shown that $\Phi(f_i)(x_0) = x_0$.  This implies that $$\Phi(b)(x_0) = \Phi(f_1)\Phi(f_2)(x_0)=x_0,$$ which finishes the proof.  
\end{proof}

%---------------------------------
\subsection{$\germ$ does not act on the circle} \label{circle subsec}

By generalizing Lemma \ref{militon lemma}, we will prove that $\germ$ has no action on $S^1$.  
As before, let $T$ denote the central element of $\Homeo_\Z(\R)$.  Then $\Homeo_\Z(\R)/\langle T^k \rangle$ is naturally isomorphic to the subgroup $G_k \subset \Homeo_+(S^1)$ consisting of homeomorphisms that commute with an order $k$ rotation.

\begin{lemma}  \label{circle lemma}
Let $\phi: \Homeo_\Z(\R) \to \Homeo_+(S^1)$ be a homomorphism.  Either $\phi$ descends to a map $\Homeo_\Z(\R)/\langle T^k \rangle \to \Homeo_+(S^1)$ that is conjugate to the natural inclusion described above, or $\phi$ has a global fixed point and is topologically diagonal.   
\end{lemma}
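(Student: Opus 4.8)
The plan is to mirror the two-case structure of Militon's Lemma \ref{militon lemma}, replacing the target $\R$ by $S^1$, and to isolate the one genuinely new phenomenon: on the circle a finite-order element can preserve a cyclic order, and this is exactly what produces the ``natural inclusion'' alternative. First I would dispose of the easy case. Recall that $\Homeo_\Z(\R)$ is the group of lifts of $\Homeo_+(S^1)$, with center $\langle T \rangle$, and that $G_k = \Homeo_\Z(\R)/\langle T^k \rangle$ is the centralizer of an order-$k$ rotation in $\Homeo_+(S^1)$. If $\phi(\Homeo_\Z(\R))$ has a global fixed point $p \in S^1$, then cutting the circle at $p$ realizes the action on $S^1 \setminus \{p\} \cong \R$ as a homomorphism $\Homeo_\Z(\R) \to \Homeo_+(\R)$, and Militon's Lemma \ref{militon lemma} shows it is topologically diagonal. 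So from now on I would assume $\phi$ has no global fixed point and aim to prove $\phi$ descends to a natural inclusion.

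The core of the argument is to show that, absent a global fixed point, the central element $\phi(T)$ is conjugate to a finite-order rotation. Since $T$ is central, the sets $\fix(\phi(T^j))$ and the minimal set of $\phi(T)$ are $\phi(\Homeo_\Z(\R))$-invariant for every $j$, so I would analyze $\phi(T)$ through its rotation number $\alpha$. If $\alpha$ is irrational, the canonical semiconjugacy of $\phi(T)$ to the rotation by $\alpha$ conjugates every element commuting with $\phi(T)$ to a rotation; this produces a homomorphism $c: \Homeo_\Z(\R) \to \R/\Z$ with $c(T) = \alpha \neq 0$, contradicting that $\Homeo_\Z(\R)$ is perfect (equivalently, admits no nontrivial homomorphism to an abelian group, so that $T$ is a product of commutators). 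If $\alpha = p/q$ is rational, I would pass to $\phi(T^q)$, which has rotation number $0$ and hence nonempty fixed set $Z := \fix(\phi(T^q))$. On the cyclically ordered invariant set $Z$ the action factors through the almost-simple group $G_q$, a central $\Z/q$-extension of $\Homeo_+(S^1)$, and an order-and-continuity argument together with simplicity of $\Homeo_+(S^1)$ forces either $\phi(T^q) = \id$, so that $\phi(T)$ has finite order, or a global fixed point, which is excluded. After conjugating I may therefore assume $\phi(T)$ is a rotation of order $k$ and that $\phi$ descends to an injective $\bar\phi: G_k \to \Homeo_+(S^1)$ with $\bar\phi(\bar T)$ equal to this rotation.

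Finally I would run the Matsumoto rigidity step exactly as in Case 1 of Militon's proof. Since $\bar\phi(\bar T)$ acts freely with quotient a circle $\bar S = S^1/\langle \bar\phi(\bar T)\rangle$, the group $G_k/\langle \bar T \rangle \cong \Homeo_+(S^1)$ acts on $\bar S$, and Matsumoto's theorem (Theorem 5.3 of \cite{Matsumoto}) identifies this action with conjugation by some homeomorphism. Lifting this conjugacy through the $k$-fold covering $S^1 \to \bar S$ shows that $\bar\phi$ is conjugate to the natural inclusion $G_k \hookrightarrow \Homeo_+(S^1)$, which is the first alternative of the lemma.

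I expect the main obstacle to be the middle paragraph: controlling the dynamics of the central element $\phi(T)$. Ruling out the irrational case rests on perfectness of $\Homeo_\Z(\R)$, and the delicate point is the rational case, where I must show that a nontrivial $\phi(T^q)$ with fixed points cannot coexist with the absence of a global fixed point. This is precisely where Militon's original argument breaks down: there the \emph{linear} order on $\fix(\phi(T)) \subset \R$ killed every finite-order element, whereas finite cyclic rotations do preserve the \emph{cyclic} order on $\fix(\phi(T^q)) \subset S^1$. Extracting a global fixed point from an order-preserving action of the simple group $\Homeo_+(S^1)$ on this invariant closed set, via a continuity argument in the spirit of Proposition \ref{germ only prop} and Lemma \ref{fixed lemma}, is the heart of the proof.
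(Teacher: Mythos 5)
Your overall architecture matches the paper's: reduce the global-fixed-point case to Lemma \ref{militon lemma} by cutting the circle, analyze the dynamics of the central element $\phi(T)$ via the rotation-number trichotomy, and run Matsumoto's rigidity once $\phi(T)$ is known to be a finite-order rotation. Your treatment of the irrational case is a legitimate (and slightly slicker) variant of the paper's: the paper splits into ``conjugate to an irrational rotation'' (centralizer abelian, contradicting injectivity) and ``exceptional minimal set'' (collapse and reduce to the previous case), whereas you handle both at once via the rotation-number homomorphism on the centralizer and perfectness of $\Homeo_\Z(\R)$ --- a true fact, but one you should either prove or at least observe follows from the classification of normal subgroups of $\Homeo_\Z(\R)$ that you need anyway.

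The genuine gap is exactly the step you flag as ``the heart of the proof'' and then do not carry out: showing that a nontrivial $\phi(T^q)$ with $\emptyset \neq Z := \fix(\phi(T^q)) \subsetneq S^1$ is incompatible with the absence of a global fixed point. Appealing to ``an order-and-continuity argument together with simplicity of $\Homeo_+(S^1)$'' does not work as stated, for the reason you yourself identify: a cyclic order, unlike a linear one, does not kill finite-order elements, so Militon's argument genuinely breaks. The paper closes this gap with two specific ingredients you are missing. First, since every proper normal subgroup of $\Homeo_\Z(\R)$ is $\langle T^m\rangle$ for some $m$, and $q$ is minimal with $\fix(\phi(T^q))\neq\emptyset$, the induced action of $\Homeo_\Z(\R)/\langle T^q\rangle$ on $Z$ is either trivial (forcing $q=1$ and giving global fixed points, the excluded case) or faithful --- there is no intermediate possibility. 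Second, in the faithful case, $Z$ is an infinite circularly ordered set carrying a faithful $\Homeo_\Z(\R)/\langle T^q\rangle$-action, which yields a semiconjugacy to the standard inclusion; in particular every non-integer translation acts on $S^1$ without fixed points, hence cannot preserve any connected component of $S^1\setminus Z$. If $Z\neq S^1$ this forces the orbit of a complementary interval to consist of uncountably many pairwise disjoint open intervals, which is absurd; hence $Z=S^1$, i.e.\ $\phi(T^q)=\id$, and only then can you pass to $G_q$ and invoke Matsumoto. Without this counting argument on the complementary intervals of $Z$ (or some substitute for it), your middle paragraph is a statement of the desired conclusion rather than a proof of it.
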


\begin{proof}

It follows from the simplicity of $\Homeo_+(S^1)$ that any normal subgroup of $\Homeo_\Z(\R)$ is generated by a power of $T$.  So $\ker(\phi) = T^k$ for some $k$.   If $k>0$, then $\phi$ descends to an injective map $\bar{\phi}: \Homeo_\Z(\R)/\langle T^k \rangle \to \Homeo_+(S^1)$.    
In this case, $\bar{\phi}(T)$ is an order $k$ element of $\Homeo_+(S^1)$, hence conjugate to an order $k$ rigid rotation.  This element is central in $\bar{\phi}(\Homeo_\Z(\R)/\langle T^k \rangle)$, so (after replacing $\phi$ with a conjugate homomorphism), the image of $\bar{\phi}$ is a subgroup of $G_k$.  By Matsumoto's theorem, the induced map $\Homeo(S^1) \cong \Homeo_\Z(\R)/\langle T \rangle \to G_k/\bar{\phi}(T)$ is the standard isomorphism, so $\bar{\phi}$ is the standard inclusion.  

To treat the case of $k=0$, it suffices to prove that $\phi(\Homeo_\Z(\R)$ has a global fixed point, for we may then consider $\phi$ to have image in $\Homeo_+(\R)$ and apply Lemma \ref{militon lemma}.  
Similar to the proof of Lemma \ref{militon lemma}, we'll show that $\fix (\phi(T)) \neq \emptyset$, and that $\fix (\phi(T))$ consists of global fixed points for $\phi(\Homeo_\Z(\R))$.   

There are three possibilities for the dynamics of $\phi(T)$ (see \cite{Ghys} for a proof of this trichotomy):  
\begin{enumerate}[i)]
\item $\phi(T)$ is conjugate to an irrational rotation.
\item $\phi(T)$ has an \emph{exceptional minimal set} $K$ -- a cantor set that is contained in the closure of the orbit of a point under $\phi(T)$.  
\item $\phi(T)$ has a finite orbit
  
\end{enumerate}
In the first case, the centralizer of $\phi(T)$ is abelian, contradicting the fact that $\phi$ is injective and $\phi(\Homeo_\Z(\R))$ is contained in the centralizer of $\phi(T)$. 

In case ii), since $T$ is central, $K$ is also invariant under the action of $\phi(\Homeo_\Z(\R))$.  Collapsing the complimentary regions of $K$ to points, we get a new circle on which $\Homeo_\Z(\R)$ acts with $\phi(T)$ conjugate to an irrational rotation.  But, as we just saw above, this is impossible.  

In case iii), there is some smallest $k>0$ such that $\fix(T^k) \neq \emptyset$.  Since $T^k$ is central, $\fix(T^k)$ is a $\phi(\Homeo_\Z(\R))$-invariant set, and we get an induced action of $\Homeo_\Z(\R)/\langle T^k \rangle$ on $\fix(T^k)$.  If the action is trivial, then $k=1$ and $\fix(T)$ is a set of global fixed points.  Otherwise, it follows from our discussion of normal subgroups of $\Homeo_\Z(\R)$ (and that $k$ was minimal) that the action is faithful, in particular $\fix(T^k)$ is an infinite, circularly ordered set on which $\Homeo_\Z(\R)/\langle T^k \rangle$ acts faithfully.   This implies that $\phi$ is \emph{semi-conjugate} to a map that factors through the standard inclusion $\Homeo_\Z(\R)/\langle T^k \rangle \to \Homeo_+(S^1)$.  In particular, for any non-integer translation $\tau \in \Homeo_\Z(\R)$, the image $\phi(\tau)$ acts with no fixed point.  
 To show that the semiconjugacy is a genuine conjugacy, we need to show that $\fix(T^k) = S^1$.  If not, $S^1 \setminus \fix(T^k)$ is a collection of disjoint intervals permuted by $\phi(\Homeo_\Z(\R))$.  Since non-integer translations act without fixed points, none fixes an interval, and we conclude that there must be uncountably many disjoint intervals in $S^1 \setminus \fix(T^k)$, a contradiction.  
 \end{proof}

We can now easily conclude that $\germ$ does not act on the circle.  

\begin{proposition} \label{circle prop}
There is no nontrivial homomorphism $\germ \to \Homeo_+(S^1)$ 
\end{proposition}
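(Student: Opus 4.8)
The plan is to combine the trichotomy of Lemma \ref{circle lemma} with the proof of Theorem \ref{main thm}. Suppose for contradiction that $\Phi:\germ \to \Homeo_+(S^1)$ is nontrivial; since $\germ$ is simple (Proposition \ref{simple prop}), $\Phi$ is injective. I would then study the composition $\Homeo_\Z(\R) \overset{\pi}{\to} \germ \overset{\Phi}{\to} \Homeo_+(S^1)$ and feed it into Lemma \ref{circle lemma}. It is worth noting that $\pi$ is injective on $\Homeo_\Z(\R)$: a homeomorphism commuting with $T$ whose germ is trivial is the identity on some $[N,\infty)$, hence the identity everywhere, since $f(x)=f(x+n)-n=x$ once $x+n\ge N$. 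Thus the composition above is injective.

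Lemma \ref{circle lemma} offers two alternatives. The first, that the composition descends to $\Homeo_\Z(\R)/\langle T^k\rangle$, would put $T^k$ in the kernel for some $k\ge 1$, forcing $\Phi(t^k)=\id$ for the germ $t=a_1$ of $T$; but $t^k\ne 1$ in $\germ$, contradicting injectivity of $\Phi$. Hence the second alternative must hold: the composition has a global fixed point $p\in S^1$ and is topologically diagonal.

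It remains to convert this into the line situation already handled by Theorem \ref{main thm}. Because $a_1\in\pi(\Homeo_\Z(\R))$, the point $p$ lies in $\fix\Phi(a_1)$, so $S^1\setminus\{p\}$ is a copy of $\R$ containing every connected component $I$ of $S^1\setminus\fix\Phi(a_1)$ (and at least one such $I$ exists, as $\Phi(a_1)\ne\id$). Viewed as a map into $\Homeo_+(S^1\setminus\{p\})=\Homeo_+(\R)$, the composition $\Phi\circ\pi$ is topologically diagonal, which is exactly the conclusion that Militon's Lemma \ref{militon lemma} supplies in the proof of Theorem \ref{main thm}. I would therefore run that proof unchanged: it yields a homeomorphism $f:\R\to I$ conjugating $\Phi(a_t)|_I$ to translation by $t$; Lemma \ref{fixed lemma}, whose proof uses only the subgroups $G_i\subset\Homeo_\Z(\R)$, the germs $f_i$, and the families $h^t_i f_i (h^t_i)^{-1}\in\Homeo_\Z(\R)$ (all of which fix $p$), gives $\Phi(b_s)(I)=I$; and conjugating the restrictions $\Phi(a_t)|_I$ and $\Phi(b_s)|_I$ by $f$ produces homeomorphisms $A_t,B_s$ of $\R$ satisfying the hypotheses of Proposition \ref{germ only prop}. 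That proposition forbids such homeomorphisms, the desired contradiction.

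The main obstacle is this last reduction: one must verify that the global fixed point $p$ produced in the second alternative is compatible with every step imported from the proof of Theorem \ref{main thm} --- that $I$ genuinely lies in a copy of $\R$ so Proposition \ref{germ only prop} applies, and that the commuting-plus-continuity argument of Lemma \ref{fixed lemma} carries over to $S^1\setminus\{p\}$ because all the auxiliary homeomorphisms it invokes fix $p$. Excluding the first alternative of Lemma \ref{circle lemma} is comparatively routine once injectivity of $\Phi$ is in hand.
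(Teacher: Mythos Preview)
Your proposal is correct and follows essentially the same route as the paper: invoke simplicity for injectivity, apply Lemma~\ref{circle lemma} to $\Homeo_\Z(\R)\overset{\pi}{\to}\germ\overset{\Phi}{\to}\Homeo_+(S^1)$, rule out the quotient alternative by injectivity, and then rerun the proof of Theorem~\ref{main thm} inside $S^1\setminus\{p\}\cong\R$ to contradict Proposition~\ref{germ only prop}. One small imprecision: you assert that the germs $f_i$ fix $p$, but this is neither established nor needed---the argument of Lemma~\ref{fixed lemma} only requires that $G_i$ and the conjugates $h^t_i f_i (h^t_i)^{-1}$ lie in $\Homeo_\Z(\R)$ (hence fix $p$) and that $f_i$, $h^t_i$ commute with $G_i$, which forces them to permute components of $S^1\setminus\fix(\Phi(G_i))\subset S^1\setminus\{p\}$.
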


\begin{proof}
Suppose $\Phi: \germ \to \Homeo_+(S^1)$ were a nontrivial homomorphism.  Since $\germ$ is simple, $\phi$ is injective.   By Lemma \ref{circle lemma}, $\Homeo_\Z(\R) \subset \germ$ maps injectively to $\Homeo_+(S^1)$, so has a global fixed point and is topologically diagonal.   The proof of Lemma \ref{fixed lemma} now goes through verbatim and shows that $\Phi(b_t)$ preserves each interval on which $\Phi(a_s)$ acts by translation, contradicting Proposition \ref{germ only prop}.  
\end{proof}

%---------------------------------
\subsection{Homomorphisms between groups of homeomorphisms} \label{militon subsec}

In \cite{Militon}, Militon proves that for any 1-manifold $M$, the only nontrivial homomorphisms $$\Homeo_c(\R) \to \Homeo(M)$$ are topologically diagonal embeddings.  As a consequence of our work, we can extend this to a statement about actions of $\Homeo_+(\R)$.  We outline the argument below.  

\begin{theorem}
Let $M$ be a 1-manifold and 
let $\phi: \Homeo_+(\R) \to \Homeo_+(M)$ be a nontrivial homomorphism.  Then $\phi$ is a topologically diagonal embedding.   
\end{theorem}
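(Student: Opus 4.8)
The plan is to bootstrap Militon's theorem for $\Homeo_c(\R)$ up to all of $\Homeo_+(\R)$, using the normality of $\Homeo_c(\R)$ together with the fact that the relevant quotient is built out of germ groups that, by Theorem \ref{main thm} and Proposition \ref{circle prop}, admit no action on $\R$ or $S^1$. The organizing principle is that $\Homeo_c(\R) \triangleleft \Homeo_+(\R)$ with $\Homeo_+(\R)/\Homeo_c(\R) \cong \germ \times \germ$ (germs at the two ends of $\R$), so on any region where $\phi(\Homeo_c(\R))$ acts trivially the residual $\Homeo_+(\R)$-action factors through $\germ \times \germ$; since $\germ$ is simple and does not act on $\R$ or $S^1$, every such residual action is forced to be trivial.

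First I would restrict $\phi$ to $\Homeo_c(\R)$. If this restriction were trivial, then $\phi$ would factor through $\germ \times \germ$, and I would rule this out directly: by simplicity each factor maps in trivially or injectively, an injective factor would give a faithful action of $\germ$ on $M$, and passing to the connected components of $M$ (each homeomorphic to $\R$, $S^1$, or an interval) Theorem \ref{main thm} and Proposition \ref{circle prop} forbid this, contradicting nontriviality of $\phi$. Hence $\phi|_{\Homeo_c(\R)}$ is nontrivial and Militon's theorem applies: there are disjoint open intervals $I_n \subset M$ and homeomorphisms $f_n \colon \R \to I_n$ with $\phi(g)|_{I_n} = f_n g f_n^{-1}$ and $\phi(g) = \id$ off $\bigcup_n I_n$ for every $g \in \Homeo_c(\R)$.

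The heart of the argument is to promote this to all of $\Homeo_+(\R)$. Since $\Homeo_c(\R)$ is normal, $\phi(h)$ conjugates $\phi(\Homeo_c(\R))$ to itself and hence permutes the connected components $I_n$ of $M \setminus \fix(\phi(\Homeo_c(\R)))$. The $I_n$ inherit a natural order from the ambient component of $M$ (linear if that component is $\cong \R$ or an interval, cyclic if $\cong S^1$) which $\phi(h)$ preserves; realizing this ordered index set on $\R$ or $S^1$ produces an action of $\germ \times \germ$ (the permutation kills $\Homeo_c(\R)$), necessarily trivial by Theorem \ref{main thm} / Proposition \ref{circle prop}, so each $I_n$ is preserved. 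Fixing $I_n$ and conjugating by $f_n$ gives a homomorphism $\theta \colon \Homeo_+(\R) \to \Homeo_+(\R)$ that restricts to the identity on $\Homeo_c(\R)$; then for $g \in \Homeo_c(\R)$ one has $\theta(h) g \theta(h)^{-1} = \theta(hgh^{-1}) = hgh^{-1}$, so $h^{-1}\theta(h)$ centralizes $\Homeo_c(\R)$ in $\Homeo_+(\R)$. As that centralizer is trivial, $\theta(h) = h$, i.e. $\phi(h)|_{I_n} = f_n h f_n^{-1}$.

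It remains to show $\phi(h) = \id$ off $\bigcup_n I_n$. On each complementary open interval $J$ all of $\phi(\Homeo_c(\R))$ acts trivially, so the $\Homeo_+(\R)$-action on $J \cong \R$ again factors through $\germ \times \germ$ and is trivial by Theorem \ref{main thm}; the remaining complementary points lie in $\overline{\bigcup_n I_n}$ and are fixed by continuity, since $\phi(h)$ fixes every endpoint of every $I_n$. This gives precisely the topologically diagonal form. I expect the main obstacle to be this extension step: the naive danger is that $\phi(h)$ shuffles the intervals $I_n$ or acts nontrivially on the gaps between them, and it is exactly the non-existence of germ actions on $\R$ and $S^1$ that rules both out. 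The one technical point to handle carefully is the realization of the (possibly complicated) ordered index set $\{I_n\}$ inside $\R$ or $S^1$, so that the order-preserving permutation action genuinely becomes an action by homeomorphisms to which Theorem \ref{main thm} and Proposition \ref{circle prop} can be applied.
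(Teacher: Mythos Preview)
Your strategy is sound and genuinely different from the paper's. Both proofs begin by applying Militon's theorem to $\phi|_{\Homeo_c(\R)}$, but diverge at the crucial step of showing each $I_n$ is $\phi(\Homeo_+(\R))$-invariant. The paper argues by \emph{continuity}: it factors an arbitrary $g \in \Homeo_+(\R)$ as a compactly supported element times finitely many elements of conjugates of $\Homeo_\Z(\R)$, and since $\phi$ restricted to each such subgroup is continuous (by Militon's Lemma~\ref{militon lemma}), one gets a path $\phi(g_t)$ from $\phi(g)$ to $\id$ along which the permutation of the $I_n$ cannot change. Your approach instead kills the permutation algebraically, by forcing it to factor through $\germ \times \mathcal{G}_{-\infty}$ and invoking Theorem~\ref{main thm}/Proposition~\ref{circle prop}. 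For the identification $\phi(h)|_{I_n} = f_n h f_n^{-1}$, your centralizer argument is cleaner than the paper's support computation (and essentially equivalent); and your explicit treatment of the gaps $J$, which the paper leaves implicit, is a nice addition.

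The technical point you flag is real and deserves care: you cannot simply ``realize the ordered index set $\{I_n\}$'' in $\R$ and invoke Theorem~\ref{main thm}, because an order-preserving action of an uncountable group on a countable set need not extend to homeomorphisms of $\R$, and naive collapsing of the $\overline{I_n}$ can over-identify when adjacent intervals share endpoints. One clean fix: define $\psi(h)$ to agree with $\phi(h)$ on $\fix(\phi(\Homeo_c(\R)))$ and, on each $I_n$, to be the canonical (say, obtained via fixed identifications $I_n \cong (0,1)$) homeomorphism $I_n \to \phi(h)(I_n)$ matching $\phi(h)$ on the endpoints. One checks $\psi$ is a continuous homomorphism into $\Homeo_+(C)$ that kills $\Homeo_c(\R)$, so it factors through $\germ \times \mathcal{G}_{-\infty}$, hence is trivial; this forces $\phi(h)(I_n) = I_n$. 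With that filled in, your argument goes through.
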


\begin{proof}
To reduce to the case of $M = \R$, we need to show that any homomorphism $\phi: \Homeo_+(\R) \to \Homeo_+(S^1)$ has a global fixed point.  Proposition 2.3 in \cite{Militon} states that the image of $\Homeo_c(\R)$ must have a fixed point.  Consider the action of $\germ = \Homeo_+(\R)/\Homeo_c(\R)$ on $\fix(\Homeo_c(\R)) \subset S^1$.   If some point in $\fix(\Homeo_c(\R))$ has finite orbit under $\germ$, then by simplicity of $\germ$ there must be a global fixed point.  Otherwise, $\fix(\Homeo_c(\R))$ is either $S^1$ or a cantor set.  In the first case, Proposition \ref{circle prop} states that the action of $\germ$ on $\fix(\Homeo_c(\R))$ is trivial.  In the second case, we can collapse the complementary regions of the cantor set to points to form a circle with an induced action of $\germ$ and apply Proposition \ref{circle prop} here to conclude that germ acts trivially on $\fix(\Homeo_c(\R))$.  

Now we need to show that any homomorphism $\phi: \Homeo_+(\R) \to \Homeo_+(\R)$ is a topologically diagonal embedding.  
We claim first that such a $\phi$ is injective.  If not, the kernel of $\phi$ is a normal subgroup, so by \cite{FS} (or by an argument very similar to our proof of Proposition \ref{simple prop}), $\ker(\phi)$ is either equal to $\Homeo_c(\R)$, to the group of homeomorphisms that pointwise fix a neighborhood of $-\infty$, or to the group of homeomorphisms that pointwise fix a neighborhood of $\infty$.  In any case, the induced map $\Homeo_+(\R)/\ker(\phi) \to \Homeo_+(\R)$ will give an injective map from either $\germ$ or $\mathcal{G}_{-\infty} \cong \germ$ to $\Homeo_+(\R)$.  But Theorem \ref{main thm} states that no such map exists.   Thus, $\phi$ is injective.

Now, by Militon's theorem in \cite{Militon}, $\phi(\Homeo_c(\R))$ is a topologically diagonal embedding.  Let $\{ I_n\}$ be the set of intervals on which the action of $\phi(\Homeo_c(\R))$ is conjugate to the standard action of $\Homeo_c(\R)$ on $\R$ via homeomorphisms $f_n: \R \to I_n$.  
Since $\Homeo_c(\R) \subset \Homeo_+(\R)$ is normal, for any $g \in \Homeo_+(\R)$, the map $\phi(g)$ permutes the intervals $I_n$.   
As we did in the proofs of Proposition \ref{simple prop} and Theorem \ref{main thm} we can now use continuity to show that $\phi(g)(I_n) = I_n$.  In more detail, one can factor $g$ as a finite product $g = f g_1 g_2 ... g_k$ where $f \in \Homeo_c(\R)$ and each $g_i$ lies in some conjugate of $\Homeo_\Z(\R)$.  Since the restriction of $\phi$ to $\Homeo_\Z(\R)$ is continuous, as is its restriction to $\Homeo_c(\R)$, we can build a path $g_t$ from $g$ to the identity such that $\phi(g_t)$ is continuous in $t$.  Each $\phi(g_t)$ permutes the intervals $I_n$, so by continuity $\phi(g)(I_n) = I_n$.  

It remains only to show that the restriction of $\phi(g)$ to $I_n$ agrees with $f_n g f_n^{-1}$ for all $g \in \Homeo_+(\R)$.  We already know this is true for any element $g \in \Homeo_c(\R)$.  To see this for general $g$, let $x \in I_n$ and consider a sequence $h_k \in \Homeo_c(\R)$ with $\cap_k \supp(h_k) = f_n^{-1}(x)$.  (Here $\supp(h_k)$ denotes the support of $h_k$).  
Then $\cap_k \supp(\phi(h_k)) = x$, and 
$$\phi(g) (x) = \phi(g)\left( \bigcap \limits_k \supp(\phi(h_k)) \right) = \bigcap \limits_k \supp(\phi(g h_k g^{-1})) = f_n \left( \bigcap \limits_k (\supp(g h_k g^{-1}) \right)$$
but  $f_n \left( \bigcap \limits_k (\supp(g h_k g^{-1}) \right) = f_n (g f_n^{-1}(x))$, and this is what we needed to show.  

\end{proof}

%---------------------------------------------------------------------------------
\section{Other left-orderable groups that don't act on the line}  \label{Examples sec}

We conclude by illustrating a different approach to construct left-orderable groups that don't act on the line, inspired by C. Rivas.  
In this approach, one takes a group $\Gamma$ which has very few left orders (or equivalently, very few actions on the line) and builds a group $G$ containing uncountably many copies of $\Gamma$.   The goal is to define appropriate relations between the copies of $\Gamma$ so as to force any action of $G$ on the line to be supported on uncountably many \emph{disjoint} intervals -- which is, of course, impossible.

To illustrate the technique, we begin with a quick example of a left-orderable group of cardinality $|\R|$ that has no dynamical realization.  

\begin{proposition}
For each $r \in \R$, let $G_r \cong \Homeo_\Z(\R)$.  Let $G$ be the (external) direct product of the $G_r$.  Then $G$ is a left-orderable group of cardinality $|\R|$ that has no faithful action on the line.  
\end{proposition}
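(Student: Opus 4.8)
The plan is to verify the three assertions separately. First a caveat: I would read $G$ as the \emph{restricted} direct product (elements of finite support), which is what the cardinality claim forces, since the full Cartesian product of $|\R|$ copies of a group of size $|\R|$ has cardinality $2^{|\R|}$. With this reading the cardinality is immediate: an element of $G$ is supported on a finite set $F\subset\R$, there are $|\R|$ finite subsets of $\R$, and for fixed $F$ there are $|\R|^{|F|}=|\R|$ choices of coordinates, so $|G|=|\R|$ (the reverse bound is clear since $G$ contains one copy of $\Homeo_\Z(\R)$). For left-orderability I would fix a left order on $\Homeo_\Z(\R)$ (which exists, being a subgroup of the left-orderable group $\Homeo_+(\R)$), transport it to each $G_r$, and order $G$ lexicographically from the top: for $g=(g_r)\neq\id$ the support is finite, so $r^\ast:=\max\{r:g_r\neq\id\}$ exists, and I declare $g$ positive iff $g_{r^\ast}$ is positive in $G_{r^\ast}$. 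A short check that the positive cone is multiplicatively closed (at the larger top index the sign is inherited; at a common top index positivity survives because $G_{r^\ast}$ is ordered) shows this is a left-invariant total order.

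The heart of the matter is the absence of a faithful action. Suppose $\Phi:G\to\Homeo_+(\R)$ is injective. Each restriction $\Phi|_{G_r}$ is then injective, hence nontrivial, so by Militon's Lemma \ref{militon lemma} it is a topologically diagonal embedding; write $U_r:=\R\setminus\fix(\Phi(G_r))$ for its support, a nonempty open set on each component of which $\Phi(G_r)$ acts as a conjugate of the standard action of $\Homeo_\Z(\R)$ on $\R$, which is \emph{transitive} (it contains all translations). The key claim is that the supports $U_r$ are pairwise disjoint. Granting this, $\{U_r\}_{r\in\R}$ is an uncountable family of pairwise disjoint nonempty open subsets of $\R$, which is impossible by separability of $\R$; this is the desired contradiction.

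To prove the claim, fix $r\neq r'$, suppose $x\in U_r\cap U_{r'}$, and let $I,J$ be the components of $U_r,U_{r'}$ containing $x$. Since $G_r$ and $G_{r'}$ commute, $\Phi(G_{r'})$ preserves $\fix(\Phi(G_r))$ and hence permutes the components of $U_r$; because $\Homeo_\Z(\R)$ is path-connected (e.g.\ via the interpolation $g_s(x)=(1-s)g(x)+sx$, which remains in $\Homeo_\Z(\R)$), one can join any element of $\Phi(G_{r'})$ to the identity through elements preserving $U_r$ and track the image of a point of $I$ along the path to conclude that $\Phi(G_{r'})$ preserves the single component $I$, and symmetrically $\Phi(G_r)$ preserves $J$. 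Since $\Phi(G_r)$ acts transitively on $I$ and preserves the nonempty invariant subset $I\cap J$, transitivity forces $I\subseteq J$; by symmetry $J\subseteq I$, so $I=J$. But then $\Phi(G_r)|_I$ and $\Phi(G_{r'})|_I$ are commuting transitive actions on $I$: the centralizer of a transitive action acts freely, so $\Phi(G_{r'})|_I$ acts freely and is therefore abelian by H\"older's theorem, contradicting that it is a faithful (conjugate) copy of the non-abelian group $\Homeo_\Z(\R)$. I expect the main obstacle to be exactly this disjointness claim, and within it the two nontrivial steps: ruling out that $\Phi(G_{r'})$ permutes the components of $U_r$ nontrivially (the connectedness argument), and the transitivity/H\"older endgame forcing a contradiction once the two supports are shown to coincide on a component. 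Everything else is bookkeeping.
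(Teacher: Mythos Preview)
Your argument is correct and follows the same overall architecture as the paper: left-orderability and cardinality are routine, and the nonexistence of a faithful action comes from applying Militon's Lemma~\ref{militon lemma} to each factor, concluding that the supports $U_r$ are pairwise disjoint, and then invoking separability of~$\R$. Your observation that the cardinality claim forces the \emph{restricted} direct product is a worthwhile clarification; the paper's phrase ``generated by continuum-many groups'' implicitly makes the same choice but does not flag it.

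The one substantive difference is in the disjointness step. The paper simply cites Lemma~4.1 of \cite{Militon} to conclude that two commuting topologically diagonal copies of $\Homeo_\Z(\R)$ have disjoint supports. You instead supply a self-contained argument: path-connectedness of $\Homeo_\Z(\R)$ (together with the pointwise continuity of a topologically diagonal embedding, which is what makes the path-tracking legitimate) forces each factor to preserve the other's support components; transitivity then forces overlapping components to coincide; and finally the centralizer-of-a-transitive-action-is-free observation plus H\"older yields the contradiction. This is a clean replacement for the external citation and has the advantage of being entirely internal to the techniques already developed in the paper (the same continuity-of-paths trick appears in the proof of Lemma~\ref{fixed lemma}). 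The paper's route is shorter on the page; yours is more self-contained.
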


\begin{proof}
$G$ is left orderable since it is the direct product of left-orderable groups, and of cardinality $|\R|$ since it is generated by continuum-many groups of cardinality $|\R|$.   Suppose now for contradiction that $\phi: G \to \Homeo_+(\R)$ is an injective homomorphism.  Then by Lemma \ref{militon lemma}, for any $r, s \in \R$, the images $\phi(G_r)$ and $\phi(G_s)$ are commuting, topologically diagonal embeddings of $\Homeo_\Z(\R)$.  It follows easily that $\phi(G_r)$ and $\phi(G_s)$ are supported on disjoint intervals (see Lemma 4.1 in \cite{Militon}).   
Thus, $\{ \supp(\phi(G_r) \mid r \in \R \}$ is a collection of uncountably many pairwise disjoint sets in $\R$, each with nonempty interior, a contradiction. 

\end{proof}

Producing a group with no action on $\R$ whatsoever takes a bit more work.   The example below is due to Rivas \cite{Rivas}.  Instead of $\Homeo_\Z(\R)$, Rivas' construction uses the Klein bottle group $K := \langle a,b\mid aba^{-1}=b^{-1}\rangle$, which also has very few actions on the line.  (To be precise, $K$ admits only four left-orderings, and only two faithful actions on the line up to semi-conjugacy in $\Homeo(\R)$, but this fact is not used in the proof.  See Theorem 5.2.1 in \cite{kopytov}.)

\begin{proposition}[Rivas] \label{Rivas ex}
Let $G$ be the group generated by $\{a_s \mid s \in \R\}$ with relations 
$$a_t a_s a_t^{-1} = a_s^{-1} \text{ if } t < s.$$
Then $G$ is left-orderable, but has no action on the line.  
\end{proposition}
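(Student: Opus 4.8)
The plan is to treat the two assertions separately: left-orderability is ``soft'' and follows from the structure of the finitely generated subgroups, while the non-action statement is the substantive part and exploits the dynamics of the relation $a_t a_s a_t^{-1}=a_s^{-1}$.

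\textbf{Left-orderability.} Since the criterion of Proposition \ref{lo criterion} refers only to finitely many elements at a time, a group is left-orderable as soon as each of its finitely generated subgroups is; so it suffices to order each $H=\langle a_{s_1},\dots,a_{s_m}\rangle$ with $s_1<\cdots<s_m$. I would show $H$ is poly-(infinite cyclic): the largest generator $a_{s_m}$ is inverted by every other generator and inverts none, so $\langle a_{s_m}\rangle\cong\Z$ is normal in $H$ with quotient a group of the same type on $a_{s_1},\dots,a_{s_{m-1}}$. Iterating realizes $H$ as an iterated semidirect product of copies of $\Z$; the rewriting $a_{s_i}a_{s_j}=a_{s_j}^{-1}a_{s_i}$ for $i<j$ puts every element in the normal form $a_{s_1}^{n_1}\cdots a_{s_m}^{n_m}$, confirming that there is no collapse. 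As $\Z$ is left-orderable and an extension of a left-orderable group by a left-orderable group is left-orderable (order lexicographically, first by the quotient and then by the kernel), $H$ is left-orderable, and hence so is $G$.

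\textbf{No action on the line.} Suppose $\phi\colon G\to\Homeo_+(\R)$ is nontrivial and write $g_s=\phi(a_s)$. For any $s$ choose $t<s$; then $g_tg_sg_t^{-1}=g_s^{-1}$, and since conjugation by an orientation-preserving homeomorphism cannot reverse the direction in which a fixed-point-free map moves points, $g_s$ must have a fixed point. Next, any homeomorphism fixes setwise each component of its own support, whereas $g_t$ (for $t<s$) conjugates $g_s$ to $g_s^{-1}$ and therefore carries each component of $\supp(g_s)$ on which $g_s>\id$ to one on which $g_s<\id$; in particular $g_t$ fixes no component of $\supp(g_s)$. Three consequences follow: (i) if $g_s\neq\id$ then $g_t\neq\id$ for every $t<s$, so $\{s:g_s\neq\id\}$ is downward closed, hence uncountable once nonempty; (ii) an orientation-preserving homeomorphism permutes the components of an invariant open set in an order-preserving way, and an order-preserving permutation of a finite linearly ordered set is trivial, so $\supp(g_s)$ has infinitely many components; and (iii) since $g_t$ sends each component $J$ of $\supp(g_s)$ to a disjoint component it moves every point of $J$, giving $\supp(g_s)\subseteq\supp(g_t)$, with strict inclusion (equality would make $J$ a component of $\supp(g_t)$, which $g_t$ must fix). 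Thus the supports strictly nest: $t<s$ implies $\supp(g_s)\subsetneq\supp(g_t)$.

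To finish, I would first reduce to an action without a global fixed point. Each $g_s$ fixes $Z:=\fix(\phi(G))$ pointwise, hence preserves every connected component $W$ of $\R\setminus Z$; restricting to a $W$ on which the action is nontrivial yields an action of $G$ on $W\cong\R$ with no global fixed point that retains all the structure above. The main obstacle is the endgame: converting the strictly nested, uncountably indexed family $\{\supp(g_s)\}$ into an honest uncountable family of pairwise disjoint intervals, which $\R$ cannot contain. The key extra input is that $g_t$ cannot shift a component of $\supp(g_s)$ across a component of its \emph{own} support $\supp(g_t)\supseteq\supp(g_s)$; together with the no-fixed-component property this forces each component of $\supp(g_t)$ to contain infinitely many components of $\supp(g_s)$, separated by fixed points of $g_s$, so that distinct levels $s$ genuinely add support rather than shrinking continuously. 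Extracting from this a pairwise disjoint uncountable family, thereby ruling out the degenerate scenario in which the supports contract without ever producing disjoint ``new'' pieces, is the delicate step, and is precisely where the uncountability of the index set $\R$ must be used decisively.
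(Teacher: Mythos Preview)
Your left-orderability argument is correct and in fact cleaner than the paper's: you exhibit each finitely generated subgroup as an iterated extension of $\Z$ by $\Z$, whereas the paper defines an invariant $\tau(g)$ (the exponent sum at the smallest index appearing) and applies Proposition~\ref{lo criterion}.

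For the ``no action'' part, however, you have a genuine gap, and you identify it yourself. All of the intermediate claims you make are correct---in particular, your observation that for $t<s$ the element $g_t$ fixes no component of $\supp(g_s)$ is exactly the content of the Klein bottle lemma the paper proves (Lemma~\ref{klein lemma}). But the conclusion you draw from this, a strictly nested uncountable family $\supp(g_s)\subsetneq\supp(g_t)$, is not itself a contradiction: $\R$ happily contains uncountable strictly nested chains of open sets. Your proposed endgame, extracting uncountably many pairwise disjoint intervals from the nesting, is not carried out, and it is not clear it can be along the lines you sketch.

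The paper avoids this difficulty by looking not at the supports $\supp(g_t)$ but at the \emph{images} $g_t(I_s)$ of a single fixed component $I_s$ of $\supp(g_s)$. For $r<t<s$, the Klein bottle lemma applied to $\langle a_t,a_s\rangle$ gives $g_t(I_s)\cap I_s=\emptyset$, so $I_s\cup g_t(I_s)$ lies in a single component $I_t$ of $\supp(g_t)$; then the Klein bottle lemma applied to $\langle a_r,a_t\rangle$ gives $g_r(I_t)\cap I_t=\emptyset$, hence $g_r(I_s)\cap g_t(I_s)=\emptyset$. Thus $\{g_t(I_s):t<s\}$ is an uncountable family of pairwise disjoint open intervals, which is impossible. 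You had all the ingredients for this---your point (2) \emph{is} the Klein bottle lemma---but the missing idea is to translate a fixed interval rather than to track supports.
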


\begin{proof}
To see that $G$ is left-orderable is not difficult.  To be consistent with our earlier work, we'll give a proof using Proposition \ref{lo criterion}, starting with an easy criterion to show an element of $G$ is nontrivial.  Given $g= a_{s_1}^{n_1}a_{s_2}^{n_2} \ldots a_{s_k}^{n_k} \in G$, let $s = \min{s_i}$ and consider the sum of the exponents $n_k$ over all $k$ such that $s_k = s$.  Call this sum $\tau(g)$.  It follows from the definition of $G$ that $g \neq \id$ whenever $\tau(g)$ is nonzero.  

Given a finite collection $g_1, ..., g_n$ of nontrivial elements, define $\epsilon_i = 1$ if $\tau(g) > 0$, and $\epsilon_i = -1$ if $\tau(g) < 0$.   It follows that for any word $w$ in the semigroup generated by $\{ g_1^{\epsilon_1}, ... , g_n^{\epsilon_n}\}$, we will have $\tau(w) > 0$; in particular $w \neq \id$.  

To show that $G$ has no action on $\R$, we start with a quick lemma about $K$.  

\begin{lemma}\label{klein lemma}
Let $K= \langle a,b\mid aba^{-1}=b^{-1}\rangle$, and let $\phi: K \to \Homeo_+(\R)$ be a homomorphism such that $\phi(b) \neq \id$.  Let $I$ be any connected component of $\R \setminus \fix(\phi(b))$.  Then $\phi(a)(I) \cap I = \emptyset$.  
\end{lemma}

\begin{proof}
Since $\langle b \rangle \subset K$ is a normal subgroup, $\phi(a)$ permutes the connected components of $\R \setminus \fix(\phi(b))$.  
Thus, either $\phi(a)(I) = I$ or $\phi(a)(I) \cap I = \emptyset$.  Since $\phi(b)$ fixes no point in $I$, the restriction of $\phi(b)$ to $I$ is conjugate to a translation.  If $\phi(a)(I) = I$, then $\phi(a)|_I$ is an orientation-preserving homeomorphism of $I$ conjugating the translation $\phi(b)|_I$ to its inverse, which is impossible.  
\end{proof}

Suppose now for contradiction that there is a nontrivial homomorphism $\phi: G \to \Homeo_+(\R)$.  In particular, $\phi(a_s)$ is nontrivial for some $s$. Let $I_s$ be a connected component of $\R \setminus \fix(a_s)$.  

Consider any $r < t< s$.  We claim that $\phi(a_t)(I_s) \cap \phi(a_r)(I_s) = \emptyset$.  To see this, first note that the subgroup of $G$ generated by $a_s$ and $a_t$ is isomorphic to $K$, and Lemma \ref{klein lemma} implies that $\phi(a_t)(I_s) \cap I_s = \emptyset$.   From this, it follows also that $I_s \cup \phi(a_t)(I_s)$ is properly contained in some connected component $I_t$ of $\R \setminus \fix(\phi(a_t))$.   The subgroup generated by $a_t$ and $a_r$ is also isomorphic to $K$, and so Lemma \ref{klein lemma} implies that 
 $\phi(a_r)(I_t) \cap I_t = \emptyset$ holds as well.  It follows that  $\phi(a_t)(I_s) \subset I_t$ and $ \phi(a_r)(I_s) \subset \phi(a_r)(I_t)$ are disjoint.  
We conclude that $\{ \phi(a_t)(I_s) \mid t<s \}$ is an uncountable collection of pairwise disjoint open intervals in $\R$, which is absurd.

\end{proof}

%---------------------------------------------------------------------------------

\vspace{1in}

Dept. of Mathematics, UC Berkeley 

970 Evans Hall \#3840

Berkeley, CA 94720

E-mail: kpmann@math.berkeley.edu

\end{document}